\theoremstyle{plain}
\newtheorem{theorem}[subsection]{Theorem}
\newtheorem{lemma}[subsection]{Lemma}
\newtheorem{proposition}[subsection]{Proposition}
\newtheorem{corollary}[subsection]{Corollary}
\theoremstyle{definition}
\newtheorem{definition}[subsection]{Definition}
\newtheorem{example}[subsection]{Example}
\newtheorem{remark}[subsection]{Remark}
\newenvironment{tfae}
{
\begin{enumerate}}
{\end{enumerate}}
\newcommand{\comp}{\raisebox{0.2mm}{\ensuremath{\scriptstyle{\circ}}}}
\newcommand{\defn}{\textbf}
\newcommand{\noproof}{\hfill \qed}
\newcommand{\To}{\Rightarrow}
\newcommand{\gp}{\mathrm{Gp}}
\newcommand{\mon}{\mathrm{Mon}}
\newcommand{\rng}{\mathrm{Rng}}
\newcommand{\grd}{\mathrm{Grd}}
\newcommand{\norm}{\ensuremath{\mathrm{Norm}}}
\newcommand{\free}{\ensuremath{\mathrm{F}}}
\newcommand{\normal}{\ensuremath{\mathrm{N}}}
\newcommand{\triv}{\ensuremath{\mathrm{Triv}}}
\newcommand{\cod}{\ensuremath{\mathrm{cod}}}
 \DeclareMathOperator{\ab}{Ab}
\DeclareMathOperator{\Eq}{Eq}
\newcommand{\R}[1]{\Eq(#1)}
\newcommand{\C}{\ensuremath{\mathbb{C}}}
\newcommand{\E}{\ensuremath{\mathcal{E}}}
\newcommand{\F}{\ensuremath{\mathcal{F}}}
\newcommand{\X}{\ensuremath{\mathbb{X}}}
\newcommand{\s}{\ensuremath{\mathcal{S}}}
\newcommand{\Mon}{\ensuremath{\mathsf{Mon}}}
\newcommand{\Grp}{\ensuremath{\mathsf{Gp}}}
\newcommand{\Ab}{\ensuremath{\mathsf{Ab}}}
\newcommand{\SRng}{\ensuremath{\mathsf{SRng}}}
\newcommand{\Rng}{\ensuremath{\mathsf{Rng}}}
\newcommand{\Arr}{\ensuremath{\mathsf{Arr}}}
\newcommand{\Fib}{\ensuremath{\mathsf{Fib}}}
\newcommand{\Norm}{\ensuremath{\mathsf{Norm}}}
\newcommand{\Pt}{\ensuremath{\mathsf{Pt}}}
\newcommand{\SPt}{\ensuremath{\mathsf{SPt}}}
\newcommand{\Triv}{\ensuremath{\mathsf{Triv}}}
\def\pullback{% with thanks to Valerian Even
 \ar@{-}[]+R+<6pt,-1pt>;[]+RD+<6pt,-6pt>%
 \ar@{-}[]+D+<1pt,-6pt>;[]+RD+<6pt,-6pt>}
\def\halfsplitpullback{%
 \ar@{-}[]+R+<6pt,-1pt>;[]+RD+<6pt,-6pt>%
 \ar@{-}[]+D+<.5ex,-6pt>;[]+RD+<6pt,-6pt>}
\def\ophalfsplitpullback{%
 \ar@{-}[]+R+<6pt,-1pt>;[]+RD+<6pt,-6pt>%
 \ar@{-}[]+D+<.5ex,-6pt>;[]+RD+<6pt,-6pt>}
\def\splitpullback{%
 \ar@{-}[]+R+<6pt,-.5ex>;[]+RD+<6pt,-6pt>%
 \ar@{-}[]+D+<1ex,-6pt>;[]+RD+<6pt,-6pt>}
\def\splitsplitpullback{%
 \ar@{-}[]+R+<6pt,-.5ex>;[]+RD+<6pt,-6pt>%
 \ar@{-}[]+D+<.5ex,-6pt>;[]+RD+<6pt,-6pt>}
\def\skewpullback{%
 \ar@{-}[]+RU+<3pt,.5ex>;[]+R+<15pt,0pt>%
 \ar@{-}[]+RD+<3pt,-3.5pt>;[]+R+<15pt,0pt>}
\def\cubepullback{%
 \ar@{-}[]+RD+<6pt,-6pt>;[]+RDD+<6pt,-18pt>%
 \ar@{-}[]+D+<.5ex,-7pt>;[]+RDD+<6pt,-18pt>}
\def\cubepullbackplus{%
 \ar@{-}[]+RD+<-8pt,-8pt>;[]+RDD+<-8pt,-22pt>%
 \ar@{-}[]+D+<.5ex,-6pt>;[]+RDD+<-8pt,-22pt>}
 \def\cubepullbackminus{%
 \ar@{-}[]+RD+<-2pt,-4pt>;[]+RDD+<-2pt,-18pt>%
 \ar@{-}[]+D+<.5ex,-6pt>;[]+RDD+<-2pt,-18pt>}
 \def\cubepullbackdots{%
 \ar@{.}[]+RD+<6pt,-6pt>;[]+RDD+<6pt,-18pt>%
 \ar@{.}[]+D+<.5ex,-7pt>;[]+RDD+<6pt,-18pt>}
\def\sskewpullback{%
 \ar@{-}[]+RU+<6pt,1pt>;[]+R+<17pt,0pt>%
 \ar@{-}[]+RD+<6pt,-3pt>;[]+R+<17pt,0pt>}
 \def\pushout{%
 \ar@{-}[]+L+<-6pt,1pt>;[]+LU+<-6pt,6pt>%
 \ar@{-}[]+U+<-1pt,6pt>;[]+LU+<-6pt,6pt>}
\begin{document}

\title[Reflectiveness of normal extensions]{A criterion for\\ reflectiveness of normal extensions}

\author{Andrea Montoli}
\address[Andrea Montoli]{CMUC, Universidade de
Coimbra, 3001--501 Coimbra, Portugal\newline and\newline Institut
de Recherche en Math\'ematique et Physique, Universit\'e
catholique de Louvain, chemin du cyclotron~2 bte~L7.01.02, 1348
Louvain-la-Neuve, Belgium}
\thanks{This work was partially supported by
the Centre for Mathematics of the
 University of Coimbra -- UID/MAT/00324/2013, funded by the Portuguese
 Government through FCT/MEC and co-funded by the European Regional Development Fund
 through the Partnership Agreement PT2020.} \email{montoli@mat.uc.pt}

\author{Diana Rodelo}
\address[Diana Rodelo]{CMUC, Universidade de Coimbra,
3001--501 Coimbra, Portugal\newline and\newline Departamento de
Matem\'atica, Faculdade de Ci\^{e}ncias e Tecnologia, Universidade
do Algarve, Campus de Gambelas, 8005--139 Faro, Portugal}
\thanks{The first author is
a Postdoctoral Researcher of the Fonds de la Recherche
Scientifique--FNRS} \email{drodelo@ualg.pt}

\author{Tim Van~der Linden}
\address[Tim Van~der Linden]{Institut de
Recherche en Math\'ematique et Physique, Universit\'e catholique
de Louvain, che\-min du cyclotron~2 bte~L7.01.02, B--1348
Louvain-la-Neuve, Belgium}
\thanks{The third author is a Research
Associate of the Fonds de la Recherche Scientifique--FNRS}
\email{tim.vanderlinden@uclouvain.be}

\keywords{categorical Galois theory; admissible Galois structure;
central, normal, trivial extension; $\s $-protomodular category;
unital category; abelian object}

\subjclass[2010]{20M32, 20M50, 11R32, 19C09, 18F30}

\begin{abstract}
We give a new sufficient condition for the normal extensions in an
admissible Galois structure to be reflective. We then show that
this condition is indeed fulfilled when $\X$ is the (protomodular)
reflective subcategory of $\s $-special objects of a Barr-exact
$\s $-protomodular category $\C$, where $\s $ is the class of
split epimorphic trivial extensions in $\C $. Next to some
concrete examples where the criterion may be applied, we also
study the adjunction between a Barr-exact unital category and its
abelian core, which we prove to be admissible.
\end{abstract}

\maketitle

\section{Introduction}

In the paper~\cite{MRVdL} we studied the adjunction between the
category of monoids and the category of groups, given by the group
completion of a monoid, from the point of view of categorical
Galois theory. We showed that the adjunction is admissible with
respect to the class of surjective homomorphisms, and we described
the central extensions (which turn out to coincide with the normal
extensions): they are the so-called \emph{special homogeneous
surjections} (see \cite{SchreierBook}). In the subsequent
paper~\cite{MRVdL2}, we showed that special homogeneous
surjections of monoids are reflective amongst surjective
homomorphisms. In order to do so, we applied Theorem 4.2
in~\cite{JK:Reflectiveness}.

The adjunction between monoids and groups is an instance of a more
general situation, recently described in~\cite{SchreierBook} and
in~\cite{S-proto}: the category of monoids is \emph{$\s
$-protomodular}, with respect to a suitable class $\s $ of points
(=~split epimorphisms with a fixed splitting), and the category of
groups is its \emph{protomodular core} relatively to the class $\s
$ (see Section~\ref{Section S-protomodular categories}). $\s
$-protomodularity allows us to recover, for monoids, relative
versions of several important properties of
Mal'tsev~\cite{Carboni-Kelly-Pedicchio} and
protomodular~\cite{Bourn1991} categories, like the Split Short
Five Lemma, or the fact that every internal reflexive relation is
transitive.

The case of monoids and groups now suggests the following general
question: given an adjunction, admissible with respect to regular
epimorphisms, between a category with ``weak'' algebraic
properties and a reflective subcategory with ``strong''
properties, like a protomodular one, such that the big category is
$\s $-protomodular with respect to the class $\s $ of split
epimorphic trivial extensions, is it always the case that normal
extensions are reflective amongst regular epimorphisms?

The present paper gives an affirmative answer to this question for
the case of Barr-exact categories~\cite{Barr}. In order to do
this, we needed to obtain a new criterion for reflectiveness of
normal extensions, Theorem~\ref{Theorem Reflectiveness General}:
given a Galois structure between Barr-exact categories, which is
admissible with respect to classes of regular epimorphisms, the
category of normal extensions is reflective in the category of all
fibrations (as the morphisms in the chosen class of regular
epimorphisms are called) provided that it is closed under
coequalizers of reflexive graphs.

The paper is organised as follows. In Section~\ref{general
results} we recall some basic notions of categorical Galois theory
and we prove our criterion for reflectiveness of normal
extensions. In Section~\ref{Section S-protomodular categories} we
recall the definition, some properties and some examples of $\s
$-protomodular categories. Section~\ref{Section S-application} is
devoted to the proof that the criterion can be applied in the
context of Barr-exact $\s $-protomodular categories. In
Section~\ref{Examples} we describe the concrete examples of the
adjunction between monoids and groups and the one between
semirings and rings. Section~\ref{Section additive core} is
devoted to the study of a general class of examples, namely the
adjunction between a Barr-exact unital~\cite{B0} category and its
\emph{abelian core}. In particular, we prove that, for any
finitely cocomplete Barr-exact unital category, the reflection to
its abelian core gives an admissible Galois structure, and that
the criterion for reflectiveness of normal extensions is
applicable to this Galois structure.

%%%%%%%%%%%%%%%%%%%%%%%%%%%%%%%%%%%%%%%%%%%%%%%%%%%%%%%%%%%%%

\section{Reflectiveness of normal extensions}\label{general results}

In this section we work towards a general result on reflectiveness
of normal extensions in an admissible Galois structure:
Theorem~\ref{Theorem Reflectiveness General} which says that, if
the fibrations in the Galois structure are regular epimorphisms,
and normal extensions are closed under coequalisers of reflexive
graphs, then the normal extensions are reflective amongst the
fibrations.

\subsection{Galois structures}
\label{Subsection: Galois structures} We begin by recalling the
notion of an \emph{(admissible) Galois structure} as well as the
concepts of \emph{trivial}, \emph{normal} and \emph{central
extension} arising from it~\cite{Janelidze:Pure,
Janelidze:Recent,Janelidze-Kelly}. We consider the context of
Barr-exact categories~\cite{Barr} and restrict ourselves to
fibrations which are regular epimorphisms to avoid some technical
difficulties.

\begin{definition} \label{Def: Galois structure basic}
A \defn{Galois structure} \(\Gamma=(\C,\X,I,H,\eta, \epsilon, \E,
\F)\) consists of an adjunction
\[
\xymatrix{\C \ar@<1ex>[r]^I \ar@{}[r]|\bot & \X \ar@<1ex>[l]^H}
\]
with unit \(\eta\colon {1_{\C}\To HI}\) and counit
\(\epsilon\colon {IH\To 1_{\X}}\) between Barr-exact categories
\(\C\) and~\(\X\), as well as classes of morphisms \(\E\) in
\(\C\) and \(\F\) in \(\X\) such that:
\begin{enumerate}
 \item[(G1)] \(\E\) and \(\F\) contain all isomorphisms;
 \item[(G2)] \(\E\) and \(\F\) are pullback-stable;
 \item[(G3)] \(\E\) and \(\F\) are closed under composition;
 \item[(G4)] \(H(\F)\subseteq \E\);
 \item[(G5)] \(I(\E)\subseteq \F\).
\end{enumerate}
We call the morphisms in \(\E\) and \(\F\)
\defn{fibrations}~\cite{Janelidze:Recent}. We moreover assume
\begin{enumerate}
\item[(G6)] the classes $\E $ and $\F $ consist of the regular
epimorphisms in $\C$ and in $\X$, respectively.
\end{enumerate}
Finally, we assume that $\C$ has coequalisers of reflexive graphs.
\end{definition}

The following definitions are given with respect to a Galois
structure $\Gamma $.

\begin{definition}
\label{trivial, central, normal}
 A \defn{trivial extension} is a fibration \(f\colon{A\to B}\) in $\C $ such
 that the square
 \[
 \xymatrix{A \pullback \ar[r]^-{\eta_A} \ar[d]_f & HI(A) \ar[d]^{HI(f)}\\
 B \ar[r]_-{\eta_B} & HI(B)}
 \]
is a pullback. A \defn{central extension} is a fibration \(f\)
whose pullback \(p^*(f)\) along \emph{some} fibration \(p\) is a
trivial extension. A \defn{normal extension} is a fibration such
that its kernel pair projections are trivial extensions.
\end{definition}

It is easy to see that trivial extensions are always central
extensions and that any normal extension is necessarily a central
extension.

Given any object \(B\) in \(\C\), we can associate an adjunction
\[
 \xymatrix{(\E\downarrow B) \ar@<1ex>[r]^-{I^B} \ar@{}[r]|-\bot & (\F\downarrow I(B)), \ar@<1ex>[l]^-{H^B}}
\]
where \((\E\downarrow B)\) denotes the full subcategory of the
slice category \((\C\downarrow B)\) determined by the morphisms in
\(\E\); similarly for \({(\F\downarrow I(B))}\). The functor
\(I^B\) is just the restriction of \(I\), while \(H^B\) sends a
fibration \(g\colon {X\to I(B)}\) to the pullback
\[
\xymatrix{A \ar[r] \ar[d]_-{H^B(g)} \pullback & H(X) \ar[d]^-{H(g)}\\
B \ar[r]_-{\eta_B} & HI(B)}
\]
of~\(H(g)\) along \(\eta_B\).

\begin{definition} \label{Def: Galois structure}
A Galois structure \(\Gamma=(\C,\X,I,H,\eta,\epsilon,\E,\F)\) is
said to be
\defn{admissible} when, for every object \(B\) in \(\C\), the
functor \(H^B\) is full and faithful.
\end{definition}

In the presence of an admissible Galois structure, every trivial
extension is always a normal extension:

\begin{proposition}[\cite{JK:Reflectiveness}, Proposition~2.4]\label{Proposition Trivial implies normal}
If \(\Gamma\) is an admissible Galois structure, then \(I\colon
{\C\to \X}\) preserves pullbacks along trivial extensions. Hence a
fibration is a trivial extension if and only if it is a pullback
of some fibration in $H(\X) $. In particular, the trivial
extensions are pullback-stable, so that every trivial extension is
a normal extension.\noproof
\end{proposition}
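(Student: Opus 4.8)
The plan is to prove the three assertions of Proposition~\ref{Proposition Trivial implies normal} in the order they are stated, since each one feeds into the next. The central mechanism is admissibility, which says that each relative functor $H^B$ is full and faithful; this is precisely a statement that the unit of the adjunction $I^B \dashv H^B$ is an isomorphism on objects in the image of $H^B$, and it is the tool that converts pullback squares into statements about $I$ preserving them.

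First I would establish that $I$ preserves pullbacks along trivial extensions. Suppose $f\colon A\to B$ is a trivial extension and $g\colon C\to B$ is an arbitrary fibration, and form the pullback $P$ of $f$ and $g$. The strategy is to reduce everything to the relative adjunction over $B$. Since $f$ is a trivial extension, its defining square exhibits $f$ as $H^B$ of the fibration $I(f)\colon I(A)\to I(B)$, up to the canonical comparison; concretely, the pullback defining $H^B(I(f))$ coincides with the naturality square for $\eta$ at $f$. Admissibility then lets me identify $A$ (as an object of $(\E\downarrow B)$) with $H^B I^B(A)$. I would then compute $I$ of the pullback $P\to C$ by transporting along $H^B$ and $H^{I(B)}$, using that $H$ preserves pullbacks (it is a right adjoint) and that $\eta$ is natural. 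The key point is that pulling back a trivial extension stays inside the reflective image, so applying $I$ commutes with the pullback. This is the step I expect to be the main obstacle, because it requires carefully tracking several pullback squares simultaneously and invoking the Barr-exactness only where genuinely needed; the bookkeeping of which comparison morphisms are isomorphisms, justified by full faithfulness of $H^B$, is where the real content lies.

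Once $I$ preserves pullbacks along trivial extensions, the characterisation ``a fibration is a trivial extension if and only if it is a pullback of some fibration in $H(\X)$'' follows in both directions. For the forward direction, the defining pullback square of a trivial extension $f$ exhibits $f$ as the pullback of $HI(f)$ along $\eta_B$, and $HI(f)$ lies in $H(\X)$ by (G4) and (G5). For the converse, suppose $f$ is a pullback of $H(\phi)$ for some fibration $\phi$ in $\X$. I would apply $I$ to the pullback square: using that $I$ preserves this pullback (once we know $H(\phi)$ is a trivial extension, or by a direct argument via admissibility, since $\epsilon$ is an isomorphism so $IH(\phi)\cong\phi$) together with the triangle identities, one checks that the naturality square of $\eta$ at $f$ is itself a pullback, which is exactly the definition of a trivial extension.

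Finally, pullback-stability of trivial extensions is immediate from the characterisation: the pullback of a pullback of a fibration in $H(\X)$ is again such a pullback, by the usual pasting and cancellation of pullback squares, and fibrations are pullback-stable by (G2). For the last clause, that every trivial extension is a normal extension, I would argue that the kernel pair projections of a trivial extension are again trivial extensions. The two projections of the kernel pair of $f$ are pullbacks of $f$ along itself, hence pullbacks of a fibration in $H(\X)$ by composition of pullback squares, so by the characterisation they are trivial extensions; by definition this makes $f$ a normal extension. I would only need to confirm that the kernel pair exists and that its projections are fibrations, which follows from Barr-exactness and (G6) together with pullback-stability of $\E$.
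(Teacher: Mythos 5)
The paper itself gives no proof of this proposition---it is quoted from \cite{JK:Reflectiveness} with the proof omitted---so your attempt has to be measured against the classical Janelidze--Kelly argument, and there it falls short in two places. First, your opening step, the preservation of pullbacks, is announced rather than proved: you yourself flag the ``bookkeeping'' as the main obstacle, but that bookkeeping is the entire content. The correct execution is to paste the pullback \(P = C\times_B A\) with the triviality square of \(f\) and the naturality square of \(\eta\) at \(g\) (using \(\eta_B\comp g = HI(g)\comp \eta_C\) and the fact that \(H\), a right adjoint, preserves pullbacks) to exhibit \(P\to C\) as \(H^C(\psi)\), where \(\psi = I(g)^*(I(f))\) lies in \(\F\) by (G5) and (G2); admissibility then gives \(I(P)\cong I(C)\times_{I(B)}I(A)\). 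Two remarks on your phrasing: admissibility is equivalent to the \emph{counit} of \(I^B\dashv H^B\) being an isomorphism (full faithfulness of the right adjoint), not to the unit being invertible on the image of \(H^B\), and it is the counit form that the computation actually uses. Moreover, your ``key point'' that pulling back a trivial extension ``stays inside the reflective image'' is, as stated, the pullback-stability of trivial extensions---which you only derive two steps later, so your argument reads circular as written; the circle is avoidable, but only because the identification \(P\cong H^C(\psi)\) needs nothing beyond pasting and naturality.

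The genuine error is in the converse of the characterisation, where you invoke ``\(\epsilon\) is an isomorphism, so \(IH(\phi)\cong\phi\).'' In a general admissible Galois structure \(H\) is \emph{not} assumed fully faithful: admissibility (Definition~\ref{Def: Galois structure}) concerns the functors \(H^B\), not \(H\), and \(\epsilon\) need not be invertible. Your shortcut happens to be valid in the applications later in the paper, where \(H\) is the inclusion of a full reflective subcategory, but the proposition is stated and cited at the general level, and a proof at that level cannot use it. The repair is the standard one: given \(f = b^*(H(\phi))\) with \(b\colon B\to H(Y)\) and \(\phi\in\F\), factor \(b = H(\bar{b})\comp\eta_B\) where \(\bar{b}=\epsilon_Y\comp I(b)\), so that pasting identifies \(f\) with \(H^B(\bar{b}^*(\phi))\); then prove the lemma that \emph{every} \(H^B(\psi)\) with \(\psi\in\F\) is a trivial extension---the counit isomorphism supplied by admissibility, together with the adjunction identity showing that the naturality square of \(\eta\) at \(H^B(\psi)\) coincides with its defining pullback square, does this in a few lines. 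That lemma simultaneously closes your converse, yields pullback-stability directly (without detouring through the characterisation), and is the engine of the Janelidze--Kelly proof. Your treatment of the remaining clauses---stability by pasting and cancellation, and normality via the kernel pair projections being pullbacks of \(f\) along itself---is correct as it stands.
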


The admissibility condition of a Galois structure together with
the proposition above give the needed conditions to have the
reflectiveness of trivial extensions amongst fibrations. In fact,
the replete image of the functor $H^{B}$ is the category of
trivial extensions over $B$, denoted by $\Triv(B) $. Moreover,
$\Triv(B) $ is a reflective subcategory of \((\E\downarrow B)\),
where $H^{B}I^{B}\colon (\E\downarrow B)\to \Triv(B) $ is its
reflector. So, by Proposition~5.8 in~\cite{Im-Kelly}, we obtain a
left adjoint, called the \defn{trivialisation functor}
\[
\triv\colon{\Fib(\C)\to \Triv(\C),}
\]
to the inclusion of the category $\Triv(\C) $ of trivial
extensions in $\C $ into the full subcategory $\Fib(\C) $ of the
category of arrows in $\C $ determined by the fibrations.

\subsection{Reflectiveness of normal extensions}
Given an admissible Galois structure $\Gamma $ as in
Definition~\ref{Def: Galois structure} and an object $B$ in $\C
$, we denote by $\Norm(B) $ the full subcategory of
\((\E\downarrow B)\) determined by the normal extensions over $B$.
When it exists, the left adjoint to the inclusion functor
${\Norm(B)\hookrightarrow(\E\downarrow B)}$ will be denoted by
$\norm\colon{(\E\downarrow B) \to \Norm(B)}$ and called the
\defn{normalisation functor (over $B$)}. We also write
\[
\norm\colon \Fib(\C) \to \Norm(\C)
\]
for the left adjoint to the inclusion
$\Norm(\C)\hookrightarrow\Fib(\C) $ (where $\Norm(\C) $ is the
category whose objects are the normal extensions in $\C $) which
exists as soon as the normalisation functors over all objects $B$
exist (again by Proposition~5.8 in~\cite{Im-Kelly}, using that
normal extensions are stable under pullback).

We use the construction proposed in~\cite{EverMaltsev} and prove
that it does indeed provide us with a normalisation functor as
soon as the Galois structure $\Gamma $ is admissible and satisfies
the following condition:
\begin{enumerate}
\item[(G7)] $\Norm(\C) $ is closed under coequalisers of reflexive
graphs in $\Fib(\C)$.
\end{enumerate}
This approach is related to the results in~\cite{ED-NormalExt}
where the problem of reflectiveness of normal extensions is
studied in a much more general setting. Our present paper
and~\cite{ED-NormalExt} were written independently and around the
same, but with a different purpose in mind. Ours was to provide
simple applications of the construction in~\ref{construction}
below---essentially a simple version of the one proposed
in~\cite{Bourn-Rodelo2}, which strictly speaking cannot be applied
in the current context.

\subsection{The construction}\label{construction}
Given a fibration $f\colon{A\to B}$, we pull it back along itself,
then we take kernel pairs vertically as on the left hand side of
the diagram in Figure~\ref{Figure normalisation}. We apply the
trivialisation functor to obtain the upper right part of the
diagram, then we take the coequaliser $\underline{f}$ on the right
to get the morphism $\norm(f) $ and the comparison
$\eta_{f}^{\norm}$. The normality of $\norm(f) $ comes from
condition (G7) and the fact that all trivial extensions are normal
extensions (Proposition~\ref{Proposition Trivial implies normal}).
\begin{figure}[h]
\(\xymatrix@=30pt{ \Eq(\pi_{2})
\ar@/^1em/@{-->}[rr]^-{\eta_{\pi'_{1}}^{\triv }}
\ar@{->}@<-1ex>[d] \ar@{->}@<1ex>[d] \ar@{->>}[r]_-{\pi'_{1}} &
\R{f} \ar@<-1ex>[d] \ar@<1ex>[d] &
 \Eq(\pi_{2})_{\triv } \ar@{->>}[l]^-{\triv (\pi'_{1})} \ar@<-1ex>[d] \ar@<1ex>[d] \\
\R{f} \ar@/_1em/@{-->}[rr] \pullback \ar@{->}[u]
\ar@{->>}[d]_-{\pi_{2}} \ar@{->>}[r]^-{\pi_{1}} & A \ar[u]
\ar@{->>}[d]_-{f} & \R{f}_{\triv }
\ar@{->>}[l]_-{\triv (\pi_{1})} \ar[u] \ar@{->>}[d]^-{\underline{f}} \\
A \ar@{->>}[r]^-{f} \ar@/_1em/@{-->}[rr]_-{\eta_{f}^{\norm}} & B &
\overline{A} \ar@{->>}[l]_-{\norm(f)}}\) \caption{The construction
of $\norm(f) $}\label{Figure normalisation}
\end{figure}

\subsection{The universal property}
Let us prove that the extension $\norm(f) $ is universal amongst
all normal extensions over $B$. Suppose that $f=g\comp \alpha $,
where $g\colon {C\to B}$ is a normal extension. First note that
all steps of the construction are functorial. Next, since $g $ is
a normal extension, we have $\norm(g)=g $, $\overline{C}=C$ and
${\eta_{g}^{\norm}=1_C}$. So we get an induced morphism
$\overline{\alpha}\colon {\overline{A}\to C}$ such that $g\comp
\overline{\alpha}=\norm(f) $ and $\overline{\alpha}\comp
\eta_{f}^{\norm}=\alpha $, which proves the existence of a
factorisation. Now for the uniqueness, suppose that $\beta $,
$\gamma\colon \overline{A}\to C$ are such that
\[
g\comp \beta=\norm(f)=g\comp \gamma \quad\text{and}\quad
\beta\comp \eta_{f}^{\norm}=\alpha=\gamma\comp \eta_{f}^{\norm}.
\]
We write $\pi_1^f,\pi_2^f $ and $\pi_1^g,\pi_2^g $ for the kernel
pair projections of $f $ and $g $, respectively. From the fact
that $g $ is a normal extension, we have $\triv(\pi_1^g)=\pi_1^g $
and $\underline{g}=\pi_2^g $. Since $g\comp \alpha\comp \triv
(\pi^{f}_{1})=f\comp \triv (\pi^{f}_{1})=\norm(f)\comp
\underline{f}=g\comp \beta\comp \underline{f}$ and, likewise,
$g\comp \alpha\comp \triv (\pi^{f}_{1})=g\comp \gamma\comp
\underline{f}$, we find morphisms
\[
\widetilde{\beta}=\langle\alpha\comp
\triv(\pi_{1}^{f}),\beta\comp\underline{f}\rangle ,
\widetilde{\gamma}=\langle\alpha\comp
\triv(\pi_{1}^{f}),\gamma\comp\underline{f}\rangle\colon
{\R{f}_{\triv }\to \R{g}}
\]
such that $\pi_2^g\comp \widetilde{\beta}=\beta\comp
\underline{f}$ and $\pi_2^g\comp \widetilde{\gamma}=\gamma\comp
\underline{f}$ while
\[
 \pi^{g}_{1}\comp \widetilde{\beta}=\alpha\comp \triv (\pi^{f}_{1})
\qquad\text{and}\qquad \pi^{g}_{1}\comp
\widetilde{\gamma}=\alpha\comp \triv (\pi^{f}_{1}).
\]
Now $\widetilde{\beta}=\widetilde{\gamma}$ follows from the
uniqueness in the universal property of the trivial extension
$\triv (\pi_{1}^{f}) $: indeed, $\widetilde{\beta}\comp
\eta^{\triv
}_{\pi^{f}_{1}}=\alpha\times_{1_{B}}\alpha=\widetilde{\gamma}\comp
\eta^{\triv }_{\pi^{f}_{1}}$. Hence $\beta=\gamma $.

\subsection{The result}
Thus, keeping Proposition~5.8 in~\cite{Im-Kelly} in mind, we
obtain:
\begin{theorem}\label{Theorem Reflectiveness General}
Let \(\Gamma=(\C,\X,I,H,\eta, \epsilon, \E, \F)\) be an admissible
Galois structure such that the conditions {\rm (G6)} and {\rm
(G7)} hold. For any object $B$ in~$\C $, $\Norm(B) $ is a
reflective subcategory of~${(\E\downarrow B)}$. As a consequence,
normal extensions are reflective amongst fibrations.\noproof
\end{theorem}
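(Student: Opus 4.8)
The plan is to make the reflection unit explicit: for each object $B$ and each fibration over $B$, the construction of~\ref{construction} already produces a candidate reflection, so it remains to check that this candidate is a normal extension and that it enjoys the required universal property; the global statement will then follow from pullback-stability of normal extensions together with Proposition~5.8 in~\cite{Im-Kelly}.

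First I would fix a fibration $f\colon A\to B$, viewed as an object of $(\E\downarrow B)$, and run the construction of~\ref{construction}: form the kernel pair $\R{f}$ of $f$ and its iterated kernel pair $\Eq(\pi_2)$, trivialise the vertical data by means of the functor $\triv$, and take the coequaliser $\underline{f}$ to produce $\norm(f)\colon\overline{A}\to B$ and the comparison $\eta_f^{\norm}\colon f\to\norm(f)$ in $(\E\downarrow B)$. The crux is to see that $\norm(f)$ really belongs to $\Norm(B)$. By construction, $\norm(f)$ is obtained as the coequaliser, computed in $\Fib(\C)$, of a reflexive graph
\[
(\Eq(\pi_2)_{\triv}\to\R{f})\rightrightarrows(\R{f}_{\triv}\to A)
\]
whose two objects are trivial extensions, hence normal extensions by Proposition~\ref{Proposition Trivial implies normal}. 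Condition~(G7) therefore forces the coequaliser $\norm(f)$ to be a normal extension. I expect this normality check to be the main obstacle: one must verify that the graph is genuinely reflexive and that each of its arrows is a fibration, so that it lives in $\Fib(\C)$, and that the coequaliser there exists and is computed as expected---this is where (G6), Barr-exactness of $\C$, and the assumed existence of coequalisers of reflexive graphs are used.

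Next I would establish the universal property of $\eta_f^{\norm}$ amongst normal extensions over $B$. Given a normal extension $g\colon C\to B$ and a factorisation $f=g\comp\alpha$, functoriality of every step of the construction---together with the fact that a normal extension is left fixed by it, so that $\norm(g)=g$---supplies a morphism $\overline{\alpha}\colon\overline{A}\to C$ with $g\comp\overline{\alpha}=\norm(f)$ and $\overline{\alpha}\comp\eta_f^{\norm}=\alpha$, giving existence. For uniqueness I would compare two candidate factorisations $\beta,\gamma$ by lifting them to morphisms $\R{f}_{\triv}\to\R{g}$ and appealing to the uniqueness clause in the universal property of the trivial extension $\triv(\pi_1^f)$; as the two lifts agree after precomposition with $\eta^{\triv}_{\pi_1^f}$, they must coincide, whence $\beta=\gamma$. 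This shows that $\eta_f^{\norm}$ is a reflection of $f$ into $\Norm(B)$, so $\Norm(B)$ is reflective in $(\E\downarrow B)$.

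Finally, to pass to the global statement, I would invoke that normal extensions are stable under pullback. The pointwise reflectors $\norm\colon(\E\downarrow B)\to\Norm(B)$, ranging over all objects $B$, then assemble---once more by Proposition~5.8 in~\cite{Im-Kelly}---into a single left adjoint $\norm\colon\Fib(\C)\to\Norm(\C)$ to the inclusion, which is precisely the assertion that normal extensions are reflective amongst fibrations.
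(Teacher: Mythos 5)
Your proposal is correct and matches the paper's own proof essentially step for step: it runs the construction of~\ref{construction}, deduces normality of $\norm(f)$ from {\rm (G7)} together with Proposition~\ref{Proposition Trivial implies normal} (the coequalised reflexive graph having trivial, hence normal, extensions $\triv(\pi'_1)$ and $\triv(\pi_1)$ as its objects), establishes the universal property exactly as the paper does (existence by functoriality and $\norm(g)=g$ for a normal extension $g$; uniqueness by lifting competing factorisations to morphisms $\R{f}_{\triv}\to\R{g}$ and invoking the uniqueness clause in the universal property of the trivial extension $\triv(\pi_1^f)$), and then passes to the global statement via pullback-stability of normal extensions and Proposition~5.8 of~\cite{Im-Kelly}. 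There is nothing to add.
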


\subsection{A weaker condition}
Condition (G7) is nice and simple, but it is slightly too strong
to be applied to $\s$-protomodular categories as in
Section~\ref{Section S-application}. We may replace it by the
following slightly weaker alternative, which is clearly still
strong enough to imply the conclusion of Theorem~\ref{Theorem
Reflectiveness General}:

\begin{enumerate}
\item[(G7${}^{-}$)] $\Norm(\C) $ is closed under coequalisers, in
the category $\Arr(\C)$ of arrows in $\C$, of certain reflexive
graphs in $\Fib(\C)$: given a reflexive graph of the following
form
\begin{equation*}\label{SpecialRG}
\vcenter{\xymatrix@=30pt{ R \ar@<-1ex>[r] \ar@{-{>>}}[d]_-{f''} \ar@<1ex>[r] & A' \ar[l] \ar@{-{>>}}[d]^-{f'} \ar@{->>}[r]^-{g} & A \ar@{-{>>}}[d]^-f\\
\Eq(h)  \ar@<-1ex>[r] \ar@<1ex>[r]
 & B'  \ar[l] \ar@{->>}[r]_-{h} & B }}
\end{equation*}
and its coequaliser, if $f'$ and $f''$ are normal extensions, then
also $f$ is a normal extension.
\end{enumerate}

We thus obtain
\begin{theorem}\label{Theorem Reflectiveness (G7minus)}
Let \(\Gamma=(\C,\X,I,H,\eta, \epsilon, \E, \F)\) be an admissible
Galois structure such that the conditions {\rm (G6)} and {\rm
(G7${}^{-}$)} hold. For any object $B$ in~$\C $, $\Norm(B) $ is a
reflective subcategory of~${(\E\downarrow B)}$. As a consequence,
normal extensions are reflective amongst fibrations.\noproof
\end{theorem}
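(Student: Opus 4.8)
The plan is to observe that the entire development leading to Theorem~\ref{Theorem Reflectiveness General} invokes (G7) in exactly one spot, and then to show that (G7${}^{-}$) is tailored to cover precisely that spot. In the construction of~\ref{construction}, the object $\norm(f)$ is produced as the coequaliser $\underline{f}$ of the reflexive graph sitting in the right-hand column of Figure~\ref{Figure normalisation}, and the only assertion of normality along the way is that this coequaliser is again a normal extension. Everything else---the functoriality of the construction and the verification of the universal property---makes no reference to (G7) whatsoever. I would therefore keep the construction and its universal property untouched, and merely re-justify this single normality claim.

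First I would match the reflexive graph in $\Fib(\C)$ whose coequaliser defines $\norm(f)$ with the schematic graph displayed in (G7${}^{-}$). Reading off Figure~\ref{Figure normalisation}, the coequaliser is computed componentwise in $\Arr(\C)$: its codomain part is the kernel pair $\R{f}\rightrightarrows A$, with coequaliser $f\colon A\to B$, and its domain part is the trivialised graph $\Eq(\pi_2)_{\triv}\rightrightarrows\R{f}_{\triv}$, with coequaliser $\underline{f}\colon\R{f}_{\triv}\to\overline{A}$. Thus one takes, in the notation of (G7${}^{-}$), $h=f$ so that $\Eq(h)=\R{f}$ and $B'=A$, together with $R=\Eq(\pi_2)_{\triv}$, $A'=\R{f}_{\triv}$ and $g=\underline{f}$. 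The two connecting arrows are $f''=\triv(\pi'_1)\colon\Eq(\pi_2)_{\triv}\to\R{f}$ and $f'=\triv(\pi_1)\colon\R{f}_{\triv}\to A$, and the arrow induced on coequalisers---the one named $f$ in (G7${}^{-}$)---is exactly $\norm(f)\colon\overline{A}\to B$. The essential feature is that the codomain graph is a genuine kernel pair, which is precisely what (G7${}^{-}$) demands in place of an arbitrary reflexive relation.

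Next I would verify the hypotheses of (G7${}^{-}$): by construction $f'=\triv(\pi_1)$ and $f''=\triv(\pi'_1)$ are trivial extensions, and trivial extensions are normal by Proposition~\ref{Proposition Trivial implies normal}. Hence (G7${}^{-}$) applies and yields that $\norm(f)$ is a normal extension, reproducing exactly the conclusion that (G7) supplied in the proof of Theorem~\ref{Theorem Reflectiveness General}. With this one step replaced, the universal property argument transfers verbatim, so $\norm(f)$ is the reflection of $f$ into $\Norm(B)$, and the theorem follows. I expect the only delicate point to be the bookkeeping in the identification above---confirming that the codomain of the graph really is the kernel pair $\Eq(h)\rightrightarrows B'$ of $h=f$ and that $g=\underline{f}$ is the relevant coequaliser---but once this matching is in place there is nothing further to prove, which is why the statement may be left with \noproof.
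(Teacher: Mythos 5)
Your proposal is correct and coincides with the paper's intended argument: the theorem is stated there with no separate proof, only the remark that (G7${}^{-}$) is ``clearly still strong enough'' to run the argument of Theorem~\ref{Theorem Reflectiveness General}, and your identification of the coequaliser diagram of Figure~\ref{Figure normalisation} with the schematic graph of (G7${}^{-}$)---taking $h=f$, $B'=A$, $\Eq(h)=\R{f}$, $f''=\triv(\pi'_1)$, $f'=\triv(\pi_1)$, $g=\underline{f}$, and the induced arrow equal to $\norm(f)$---is precisely the verification that fills in that remark. Your observation that the construction and the universal-property argument never invoke (G7), so that only the single normality claim for $\norm(f)$ needs re-justification (via Proposition~\ref{Proposition Trivial implies normal} and the fact that, by (G6), $f$ is the coequaliser of its kernel pair), is exactly right.
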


%%%%%%%%%%%%%%%%%%%%%%%%%%%%%%%%%%%%%%%%%%%%%%%%%%%%%%%

\section{$\s $-protomodular categories} \label{Section S-protomodular categories}

Our criterion for the reflectiveness of normal extensions (Theorem
\ref{Theorem Reflectiveness (G7minus)}) can be applied to a
general algebraic situation, in which the category $\C $ is an
\emph{$\s $-protomodular category}. The aim of this section is to
recall the definition of an $\s $-protomodular category, as well
as the results we need in order to show that this reflectiveness
criterion is applicable.

The notion of $\s $-protomodular category was introduced for a
pointed context in~\cite{SchreierBook}, and further developed in
\cite{S-proto}. An extension to the non-pointed case was then
considered in~\cite{Bourn2014}.

Let $\C $ be a finitely complete category. We denote by $\Pt(\C) $
the \defn{category of points} in $\C $, whose objects $(f,s) $ are
the split epimorphisms $f\colon {A\to B}$ with a chosen section $s
\colon {B\to A}$ as in
\[
\xymatrix{ A \ar@<-.5ex>[r]_f & B \ar@<-.5ex>[l]_s}\qquad\qquad
f\comp s=1_{B}
\]
and whose morphisms are pairs of morphisms which form commutative
squares with both the split epimorphisms and their sections. Since
split epimorphisms are stable under pullbacks, the functor $\cod
\colon \Pt(\C) \to \C $, which associates with every split
epimorphism its codomain, is a fibration, usually called the
\defn{fibration of points}. Let $\s $ be a class of points in $\C
$ which is stable under pullbacks. If we look at it as a full
subcategory $\SPt(\C) $ of $\Pt(\C) $, it gives rise to a
subfibration $\s $-$\cod $ of the fibration of points. A point
$(f\colon{A\to B},s\colon B\to A) $ in a pointed category $\C $ is said to be a
\defn{strong point} if the pair $(k, s) $, where $k $ is a kernel
of $f $, is jointly strongly epimorphic. Strong points were
considered in~\cite{MartinsMontoliSobral2}, under the name of
\emph{regular points}, and independently in \cite{Bourn-monad},
under the name of \emph{strongly split epimorphisms}.

\begin{definition}[\cite{SchreierBook}, Definition $8.1.1$] \label{S-protomodular category}
Let $\C $ be a pointed finitely complete category, and $\s $ a
pullback-stable class of points. We say that $\C $ is
\defn{$\s $-proto\-modular} when:
\begin{itemize}
\item[(1)] every point in $\SPt(\C) $ is a strong point;
\item[(2)]$\SPt(\C) $ is closed under finite limits in $\Pt(\C) $.
\end{itemize}
\end{definition}

\begin{remark}\label{non-pointed S-protomodular}
As mentioned in~\cite{Bourn2014}, in a pointed finitely complete
category $\C $ a point $(f,s) $ is strong if and only if, for any
pullback as in the diagram
\[
\xymatrix@C=30pt{ P \ophalfsplitpullback \ar[r]^-{\pi_2} \ar@<0.5ex>[d]^(.6){\pi_1} & A \ar@<0.5ex>[d]^{f}\\
 C \ar[r]_-{g} \ar@<0.5ex>[u] & B, \ar@<0.5ex>[u]^s }
 \]
the pair $(\pi_2, s) $ is jointly strongly epimorphic. Thanks to
this fact, the definition of $\s $-protomodular category can be
extended to the non-pointed case, by simply replacing the notion
of strong point by the property above
(see~\cite[Definition~4.3]{Bourn2014}).
\end{remark}

The name \emph{$\s $-protomodular} comes from the fact that a
pointed finitely complete category $\C $ is protomodular if and
only if every point in $\C $ is a strong point~\cite{Bourn1991}.
Hence the notion above is a version of the concept of protomodular
category, relative with respect to the class $\s $.

\begin{example}\label{first examples}
As observed in~\cite{SchreierBook}, the categories $\Mon $ of
monoids and $\SRng $ of semirings are $\s $-protomodular with
respect to the class $\s $ of \emph{Schreier split epimorphisms}
\cite{BM-FMS} (see below). Later, in~\cite{MartinsMontoliSH}, it
was proved that every \defn{J\'{o}nsson-Tarski variety}, which is
a variety whose corresponding theory contains a unique
constant~$0$ and a binary operation $+$ which satisfy the
equations $0+x = x+0 = x $ for all $x $, is $\s $-protomodular
with respect to the class of Schreier split epimorphisms. Let us
now recall the definition of such split epimorphisms.
\end{example}

\begin{definition}[\cite{BM-FMS, MartinsMontoliSH}] \label{Def: Schreier split epi}
A split epimorphism $f\colon{A\to B}$ with given splitting $s
\colon {B\to A}$ in a J\'{o}nsson-Tarski variety is a
\defn{Schreier split epimorphism} when, for every $a \in A$, there
exists a unique $\alpha $ in the kernel $N$ of $f $ such that $a =
\alpha + sf(a) $.
\end{definition}

In Section~\ref{Section additive core} we give an example of an
$\s $-protomodular category of a different nature.

Let $\C $ be an $\s $-protomodular category. We recall
from~\cite{S-proto} that an \defn{$\s $-reflexive graph} (or
\defn{$\s $-reflexive relation})
$$
\xymatrix{Q \ar@<1ex>[r]^-{d} \ar@<-1ex>[r]_-{c} & A \ar[l]|-{e}}
$$
is a reflexive graph (respectively, a reflexive relation) such
that the point $(d,e) $ belongs to $\s $. A morphism $f\colon A\to
B$ is called an
\defn{$\s $-special morphism} when its kernel pair $\Eq(f) $ is an
$\s $-reflexive relation. An object $X$ is called an
\defn{$\s $-special object} when the indiscrete relation on $X$ is an
$\s $-reflexive relation. This means that the point
$(p_{1}\colon{X\times X\to X},\langle 1_{X}, 1_{X} \rangle\colon
{X\to X\times X}) $, where~$p_1$ is the first projection, belongs
to $\s $. The following result was proved, in the pointed case,
in~\cite{S-proto}, and then extended with the same proof to the non-pointed case in~\cite{Bourn2014}.

\begin{proposition}[\cite{S-proto}, Proposition $6.2$] \label{subcategory of S-special objects is protomodular}
Let $\C $ be an $\s $-protomodular category. Any split epimorphism
between $\s $-special objects is in $\s $ and, consequently, is an
$\s $-special morphism. The full subcategory $\s \C $ of $\s
$-special objects is protomodular.\noproof
\end{proposition}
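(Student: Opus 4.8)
The plan is to reduce every assertion to a single device: for an object $X$, being $\s$-special means precisely that the canonical point $\delta_X=(p_1\colon X\times X\to X,\Delta_X)$ lies in $\s$, and the idea is to exhibit each point we care about as a finite limit, taken in $\Pt(\C)$, of such points $\delta_X$ and of their pullbacks. Then closure of $\SPt(\C)$ under finite limits (condition~(2)) together with pullback-stability of $\s$ forces the point into $\s$. All the finite limits we use exist and are computed componentwise in $\C$, so each verification reduces to identifying, on underlying objects, an equalizer or a pullback in $\C$.

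First I would treat the central claim, that a split epimorphism $f\colon A\to B$ with section $s$ between $\s$-special objects yields a point $(f,s)\in\s$. Since $A$ is $\s$-special, $\delta_A\in\s$; pulling $\delta_A$ back along $s\colon B\to A$ gives, by pullback-stability of $\s$, the point $Q=(p_1\colon B\times A\to B,\langle 1_B,s\rangle)\in\s$. Since $B$ is $\s$-special, $\delta_B\in\s$. Now I would write down two morphisms of points $Q\rightrightarrows\delta_B$, both lying over $1_B$, whose components $B\times A\to B\times B$ are $1_B\times f$ and $\langle p_1,p_1\rangle$ (both commute with the projections to $B$ and send the section $\langle 1_B,s\rangle$ to $\Delta_B$). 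Their equalizer in $\Pt(\C)$ has underlying object the equalizer of these two maps, which is exactly the graph $\{(b,a)\mid f(a)=b\}\cong A$ of $f$; under this identification the structural map is $f$ and the section is $s$. Hence $(f,s)$ is a finite limit of $Q,\delta_B\in\SPt(\C)$, so $(f,s)\in\s$. I expect this equalizer construction---spotting the two parallel morphisms that cut the graph of $f$ out of $B\times A$---to be the main obstacle; once it is in place the rest is bookkeeping.

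For the ``$\s$-special'' clause I would first record that $\s\C$ is closed under finite limits in $\C$, by the same device: $\delta_{X\times Y}\cong\delta_X\times\delta_Y$ handles binary products, the terminal point handles the terminal object, and for a parallel pair $\phi,\psi\colon X\to Y$ in $\s\C$ the equalizer of $(\phi\times\phi,\phi),(\psi\times\psi,\psi)\colon\delta_X\rightrightarrows\delta_Y$ has underlying object $\Eq(\phi,\psi)\times\Eq(\phi,\psi)$, so that $\Eq(\phi,\psi)$ is $\s$-special. In particular the kernel pair $\Eq(f)=A\times_B A$ of $f$ is $\s$-special, being a finite limit of $\s$-special objects; consequently $p_1\colon\Eq(f)\to A$, with section $\Delta_A$, is a split epimorphism between $\s$-special objects, so the central claim gives $(p_1,\Delta_A)\in\s$, which is exactly the statement that $\Eq(f)$ is an $\s$-reflexive relation, i.e.\ that $f$ is $\s$-special. (Alternatively one obtains $(p_1,\Delta_A)$ directly as the equalizer of $(f\times f,f),(\langle fp_1,fp_1\rangle,f)\colon\delta_A\rightrightarrows\delta_B$.)

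Finally, for protomodularity of $\s\C$: the closure under finite limits just established makes $\s\C$ finitely complete with limits created from $\C$, so it suffices to check that every point of $\s\C$ is strong. But a point of $\s\C$ is a split epimorphism between $\s$-special objects, hence lies in $\SPt(\C)$ by the central claim, hence is a strong point by condition~(1) of $\s$-protomodularity. Using the pullback characterisation of strong points from Remark~\ref{non-pointed S-protomodular} and the fact that the relevant pullbacks are computed in $\C$, strongness transfers from $\C$ to the full subcategory $\s\C$ (a jointly strongly epimorphic pair in $\C$ remains jointly strongly epimorphic against the fewer monomorphisms available in $\s\C$). Therefore every point of $\s\C$ is strong, and $\s\C$ is protomodular.
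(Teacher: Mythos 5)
Your proof is correct, and one preliminary remark is in order: the paper itself gives no proof of this proposition --- it is imported from \cite{S-proto} with the \emph{noproof} mark --- so the only possible comparison is with the cited source, and your argument is a valid self-contained derivation from Definition~\ref{S-protomodular category} alone. The central device checks out: pulling $\delta_A=(p_1\colon A\times A\to A,\Delta_A)$ back along $s$ does yield the point $(p_1\colon B\times A\to B,\langle 1_B,s\rangle)$, which lies in $\s$ by pullback-stability; your two maps $1_B\times f$ and $\langle p_1,p_1\rangle\colon B\times A\to B\times B$ are genuine morphisms of points over $1_B$ (both carry $\langle 1_B,s\rangle$ to $\Delta_B$); and since limits in $\Pt(\C)$ are computed componentwise in $\C$ (with the section induced), their equalizer is indeed the graph of $f$ inside $B\times A$ with induced structure isomorphic to $(f,s)$, which then belongs to $\s$ by condition (2) of Definition~\ref{S-protomodular category}. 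Two small points of hygiene: your elementwise description of the equalizer should be read via generalized elements, which is routine, and you tacitly use that $\s$ is closed under isomorphism of points, which is forced by pullback-stability anyway. The closure of $\s\C$ under finite limits (products via $\delta_X\times\delta_Y\cong\delta_{X\times Y}$, equalizers via $(\phi\times\phi,\phi)$ against $(\psi\times\psi,\psi)$) is exactly the observation the paper records right after the proposition, and your transfer of strongness to the full subcategory is sound, since kernels and kernel pairs in $\s\C$ are computed in $\C$ and monomorphisms of $\s\C$ are monomorphisms of $\C$. The proof in \cite{S-proto} likewise rests on applying the finite-limit closure of $\SPt(\C)$ to the canonical points $\delta_X$; your explicit presentation of $(f,s)$ as the equalizer cutting the graph of $f$ out of $B\times A$, together with the direct equalizer of $(f\times f,f)$ and $(\langle fp_1,fp_1\rangle,f)$ producing the point $(\pi_1,\Delta_A)$ on $\Eq(f)$, is a clean and economical packaging of that idea.
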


The protomodular subcategory $\s \C $ is called the
\defn{protomodular core} of $\C $ relatively to the class $\s $.
Observe that, since $\SPt(\C) $ is closed under finite limits in~$\Pt(\C) $, the subcategory $\s \C $ is closed under finite limits in $\C $.

When $\C $ is the category of monoids, and $\s $ is the class of
Schreier split epimorphisms, the protomodular core is the category
of groups. Similarly, the protomodular core of the category of
semirings is the category of rings.

%%%%%%%%%%%%%%%%%%%%%%%%%%%%%%%%%%%%%%%%%%%%%%%%%%%%%%%

\section{An application to $\s $-protomodular categories}\label{Section S-application}
In this section we are going to consider a Galois structure
$\Gamma $ as in Definition~\ref{Def: Galois structure basic},
where $\C $ is a finitely complete Barr-exact category with
coequalisers of reflexive graphs, $\X$ is a full reflective
subcategory of~$\C $, $I$ is the reflector, $H$ is the inclusion
and $\E $ and $\F $ are the classes of regular epimorphisms. We
assume that
\begin{enumerate}
\item $\X$ is also Barr-exact; \item $H$ preserves regular
epimorphisms, so that $\Gamma $ is indeed a Galois structure;
\item $\Gamma $ is admissible; \item writing $\s $ for the class
of split epimorphic trivial extensions, the category $\C $ is $\s
$-protomodular.
\end{enumerate}
The functor $H$ being the inclusion functor, we omit it from
writing to simplify notation. Note that, $\s $ being
the class of split epimorphic trivial extensions, $\X$ is
contained in the protomodular core $\s \C$ given by $\s $-special objects: if $X \in \X $, then the
first projection $p_{1}\colon{X\times X\to X}$ is a trivial extension (because it is a morphism in $\X$). If $\C $ is pointed, then
$\X$ is precisely the protomodular core  $\s \C $. Indeed, if
$p_{1}\colon{X\times X\to X}$ is a trivial extension, then it is
the pullback of a morphism in $\X $. Hence its kernel, which is
$X$, belongs to $\X $. In any case, $\X $ is a full subcategory
of the protomodular core  $\s \C $, and being
closed under finite limits in it (since it is closed under finite
limits in $\C $), it is a protomodular category thanks to
Proposition~\ref{subcategory of S-special objects is
protomodular}, thus a Mal'tsev category (Proposition~17
in~\cite{B0}). Since~$\X$ is a Barr-exact Mal'tsev category, then
any reflexive relation is necessary the kernel pair of its
coequaliser.

Applying Theorem~\ref{Theorem Reflectiveness (G7minus)}, we shall
prove that in this setting, the normal extensions are reflective
amongst the fibrations. Since condition (G6) is fulfilled by
assumption, we only have to prove that condition (G7${}^{-}$)
holds.

In a regular category, a commutative square of regular
epimorphisms
$$
 \xymatrix{ A' \ar@{>>}[r]^-g \ar@{>>}[d]_-{f'} & A \ar@{>>}[d]^-f \\
 B' \ar@{>>}[r]_-h & B}
$$
is called a \defn{regular pushout}~\cite{Bourn2003} when the
comparison morphism to the pullback $\langle f',g\rangle \colon
{A' \to B'\times_B A}$ is a regular epimorphism.

\begin{lemma}\label{lemma regular pushouts}
In a regular category, pulling back along a morphism of regular
epimorphisms preserves regular pushout squares.
\end{lemma}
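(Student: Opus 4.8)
The plan is to make the statement precise in the arrow category $\Arr(\C)$ and then reduce everything to pullback-stability of regular epimorphisms. I would regard the given regular pushout as a morphism $(g,h)\colon f'\to f$ in $\Arr(\C)$, and a morphism of regular epimorphisms with target $f$ as a commutative square $(a,b)\colon\bar f\to f$ whose vertical arrows $\bar f\colon\bar A\to\bar B$ and $f\colon A\to B$ are regular epimorphisms. Pulling the square back along this morphism means forming the pullback $f'\times_f\bar f$ in $\Arr(\C)$; since limits there are computed pointwise, its domain is $P=A'\times_A\bar A$ (the pullback of $g$ along $a$), its codomain is $Q=B'\times_B\bar B$ (the pullback of $h$ along $b$), and the induced arrow is $\bar f'\colon P\to Q$. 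The square to be shown to be a regular pushout is the projection onto $\bar f$, with horizontal legs $\pi\colon P\to\bar A$ and $\rho\colon Q\to\bar B$ and vertical legs $\bar f'$ and $\bar f$.

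I would first record that $\pi$ and $\rho$ are pullbacks of $g$ and $h$ respectively, hence regular epimorphisms because $\C$ is regular, while $\bar f$ is a regular epimorphism by hypothesis. The heart of the argument is then to show that the comparison $\langle\bar f',\pi\rangle\colon P\to Q\times_{\bar B}\bar A$ is a regular epimorphism, and the key observation is that this comparison is itself a pullback of the original comparison $\theta=\langle f',g\rangle\colon A'\to B'\times_B A$. Concretely, one identifies $Q\times_{\bar B}\bar A$ with the pullback $B'\times_B\bar A$ of $h$ along the common composite $f\comp a=b\comp\bar f$, and then checks that the square with top edge $\langle\bar f',\pi\rangle$, bottom edge $\theta$, left edge the first projection $P\to A'$ and right edge the map $B'\times_B\bar A\to B'\times_B A$ induced by $a$ is a pullback. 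Granting this, since $\theta$ is a regular epimorphism (this is exactly the regular-pushout hypothesis on the original square) and regular epimorphisms are pullback-stable in $\C$, the comparison $\langle\bar f',\pi\rangle$ is a regular epimorphism.

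It then only remains to recover $\bar f'$ as the composite of $\langle\bar f',\pi\rangle$ with the projection $Q\times_{\bar B}\bar A\to Q$; the latter is a pullback of $\bar f$ and hence a regular epimorphism, so $\bar f'$ is one as well. Thus the pulled-back square has all four edges regular epimorphic and a regular-epimorphic comparison map, i.e.\ it is again a regular pushout. I expect the only delicate point to be the middle step: verifying the identification $Q\times_{\bar B}\bar A\cong B'\times_B\bar A$ and that the square relating the two comparison maps is a genuine pullback. This is a routine but slightly fiddly verification using only the universal properties of the pullbacks involved, after which everything reduces to the single fact that regular epimorphisms are stable under pullback in a regular category.
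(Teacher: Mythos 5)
Your proposal is correct and is in substance the paper's own argument: the paper factors a regular pushout as a comparison regular epimorphism over an identity followed by a pullback square of regular epimorphisms and notes that pulling back preserves both pieces, which, once unwound, is exactly your identification of the new comparison $\langle \bar f',\pi\rangle$ as a pullback of the original comparison $\theta$ (by pasting of pullbacks) combined with pullback-stability of regular epimorphisms. The ``fiddly'' middle verification you defer does go through, and your transposed orientation (pulling back along a morphism into the right-hand edge rather than the bottom one, i.e.\ viewing the square as $(g,h)\colon f'\to f$ instead of $(f',f)\colon g\to h$) is immaterial, since the notion of regular pushout is symmetric in the two directions.
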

\begin{proof}
A square of regular epimorphisms as above is a regular pushout if
and only if it decomposes as a composite of two squares of regular
epimorphisms
\[
\xymatrix{A' \ar@{->>}[r] \ar@{->>}[d] & B'\times_{B}A \pullback \ar@{->>}[r] \ar@{->>}[d] & A \ar@{->>}[d]\\
B' \ar@{=}[r] & B' \ar@{->>}[r]_-{h} & B,}
\]
where the square on the right is a pullback. Given a regular
epimorphism $r\colon{C'\to C}$ and a morphism $(f',f)\colon {r\to
h}$, pulling back the given regular pushout square along it yields
a regular pushout square over $r $.
\end{proof}

\begin{lemma}
\label{3x3} Any commutative solid diagram
$$
\xymatrix{
 \Eq(f) \ar@<-1ex>@{.>}[d]_-{f_1} \ar@<1ex>@{.>}[d]^-{f_2} \ar@{.>}[r]^-{\overline h} &
 \Eq(g) \ar@<-1ex>@{.>}[d]_-{g_1} \ar@<1ex>@{.>}[d]^-{g_2} \\
 A \ar@{>>}[d]_-f \ar@{>>}[r]^-h \ar@{.>}[u] & C \ar@{>>}[d]^-g \ar@{.>}[u] \\
 B \ar@{>>}[r]_-k & D, \pushout}
$$
where the bottom square $gh=kf $ is a pushout of regular
epimorphisms and $f $ is a trivial extension is a \emph{regular
pushout}. Consequently, the comparison morphism $\overline{h}$ is
also a regular epimorphism.
\end{lemma}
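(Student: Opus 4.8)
The plan is to reduce the statement to the Mal'tsev reflective subcategory $\X$ and then transport the conclusion back along the reflection, using that $f$ is trivial. First I would apply the reflector $I$ to the bottom square. As a left adjoint $I$ preserves the pushout, and by (G5) it carries the four regular epimorphisms to regular epimorphisms, so $I$ sends the square to a pushout of regular epimorphisms in $\X$. Since $\X$ is a Barr-exact Mal'tsev category, every pushout of regular epimorphisms in $\X$ is a regular pushout; hence the comparison $\langle I(f),I(h)\rangle\colon I(A)\to I(B)\times_{I(D)}I(C)$ is a regular epimorphism.

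Next I would exploit the triviality of $f$. The naturality square $\eta_B\comp f=I(f)\comp\eta_A$ is then a pullback, so $A\cong B\times_{I(B)}I(A)$; applying Proposition~\ref{Proposition Trivial implies normal} to the pullback $\Eq(f)=A\times_B A$ of $f$ along itself gives moreover $\Eq(f)\cong B\times_{I(B)}\Eq(I(f))$. Pulling the regular epimorphism $\langle I(f),I(h)\rangle$ back along $\eta_B\colon B\to I(B)$ produces a regular epimorphism $A\to B\times_{I(D)}I(C)$, and a short diagram chase identifies it with the composite of the comparison $w=\langle f,h\rangle\colon A\to B\times_D C$ followed by the canonical map $B\times_D C\to B\times_{I(D)}I(C)$.

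The delicate point, and the step I expect to be the main obstacle, is that this does not immediately yield that $w$ itself is a regular epimorphism: the canonical map $B\times_D C\to B\times_{I(D)}I(C)$ is monic exactly when $g$ is a trivial extension, which is not part of the hypotheses, so the reflection cannot do the job alone and the pushout assumption must be used essentially. I would therefore pass to kernel pairs in the Barr-exact category $\C$: writing $R=\Eq(f)$ and $S=\Eq(h)$, the pushout hypothesis identifies $D$ with $\coeq(R\vee S)$, and from there one checks that $w$ is a regular epimorphism precisely when $R\comp S=R\vee S$, that is, when $R$ and $S$ permute. Here the $\s$-protomodular structure is what saves the day: because $f$ is a trivial extension, the first projection $\Eq(f)\to A$ is a pullback of $f$ and hence itself a (split) trivial extension, so that $\Eq(f)$ is an $\s$-equivalence relation; the relative Mal'tsev property available in the $\s$-protomodular category $\C$ then forces such a relation to permute with the arbitrary effective equivalence relation $S$, giving $R\comp S=R\vee S$ and thus the regular pushout.

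Finally, for the last assertion, once the bottom square is known to be a regular pushout the regularity of the induced comparison $\overline h\colon\Eq(f)\to\Eq(g)$ follows formally: applying Lemma~\ref{lemma regular pushouts} to the kernel-pair projections (a routine $3\times 3$ argument) shows that the square relating the two kernel pairs is again a regular pushout, so in particular $\overline h$ is a regular epimorphism.
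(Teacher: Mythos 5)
Your reduction of the lemma to permutability of the kernel pairs is exactly the paper's first step: by Proposition~5.4 and Theorem~5.5 of \cite{Carboni-Kelly-Pedicchio}, the pushout square of regular epimorphisms is a regular pushout if and only if $\R{h}\R{f}=\R{f}\R{h}$, which is what your observation that ``$w$ is a regular epimorphism precisely when $R\comp S=R\vee S$'' amounts to. (Your opening move through the reflector and the pullback along $\eta_B$ is a dead end, as you yourself note, and your closing deduction that a regular pushout forces $\overline{h}$ to be a regular epimorphism is standard.) The genuine gap is the permutability itself. You dispose of it by asserting that, since $\Eq(f)$ is an $\s$-equivalence relation, ``the relative Mal'tsev property available in the $\s$-protomodular category $\C$'' forces it to permute with the arbitrary effective equivalence relation $S=\Eq(h)$. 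No such permutability theorem is among the established consequences of $\s$-protomodularity recalled in the paper or in the literature it cites, and the evident relativisation of the classical Mal'tsev argument breaks down: absolutely, one notes that the composite relation $R\comp S$ is reflexive, hence (Mal'tsev) an equivalence relation, hence symmetric, whence $R\comp S=S\comp R$; relatively, one would need $R\comp S$ to be an \emph{$\s$-reflexive} relation in order to invoke the relative property, and there is no reason why its first projection, split by the diagonal, should be a trivial extension --- membership of $\s$ is a property of a given point and is not inherited by composites of relations, even when $R$ itself is an $\s$-equivalence relation.

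This unproved assertion is precisely where the paper does its real work: the equality $\R{h}\R{f}=\R{f}\R{h}$ is established there by an argument ``completely analogous'' to the proof of Theorem~3.9 in \cite{Bourn-quandles}, which exploits the trivial-extension structure of $f$ concretely (the unit naturality square of $f$ being a pullback, so that $f$ and $\Eq(f)$ are pullbacks of data living in the Barr-exact Mal'tsev category $\X$), rather than bare $\s$-protomodularity of $\C$. A telling symptom of the gap: your claimed step would prove the lemma under the weaker hypothesis that $f$ is merely a \emph{normal} extension, since ``$\Eq(f)$ is an $\s$-equivalence relation'' records nothing more than that --- a strengthening the paper nowhere claims, and which signals that the hypothesis actually available to you is never genuinely used. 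For what it is worth, in the concrete case of monoids the permutability can indeed be checked by hand: if $(\pi_1,\Delta)$ is Schreier, then $a\,S\,c$ and $c\,\Eq(f)\,a'$ give $a'=nc$ for a unique kernel element $n$, and then $na\,S\,nc=a'$ with $f(na)=f(a)$, yielding one inclusion, the other following by symmetry; but this elementwise translation argument is exactly the kind of structure-specific reasoning, analogous to Bourn's quandle proof, that your appeal to an abstract ``relative Mal'tsev property'' silently replaces and that must actually be carried out in the general Barr-exact setting.
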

\begin{proof}
By Proposition~5.4 and Theorem~5.5
in~\cite{Carboni-Kelly-Pedicchio} it suffices to prove that
$\R{h}$ and $\R{f}$ permute to show that the bottom square is a
regular pushout. The equality $\R{h}\R{f}=\R{f}\R{h}$ can be
proved with an argument which is completely analogous to the one
used in the proof of Theorem $3.9$ in~\cite{Bourn-quandles}.
\end{proof}

We recall that kernel pairs in $\Pt(\C) $ are computed objectwise:
if $(g,h) $ is a morphism of points, then $\Eq((g,h))=(\Eq(g),
\Eq(h)) $. Moreover, when $\C $ is regular, a morphism $(g,h) $ in
$\Pt(\C) $ is a regular epimorphism if and only if both $g $ and
$h $ are regular epimorphisms in $\C $.

\begin{lemma}\label{lemma reflexive graphs}
The functor $\triv|_{\Pt(\C)}\colon\Pt(\C)\to \Pt(\C) $ preserves
coequalisers of (effective) equivalence relations.
\end{lemma}
\begin{proof}
Consider the coequaliser diagram
$$
\vcenter{\xymatrix@=30pt{ \Eq(g) \ar@<-1ex>[r]_-{g_{2}}
\ar@<.5ex>[d]^-{f''} \ar@<1ex>[r]^-{g_{1}} & A' \ar[l]
\ar@<.5ex>[d]^-{f'} \ar@{->>}[r]^-{g} & A
\ar@<.5ex>[d]^-f\\
\Eq(h) \ar@<.5ex>[u]^-{s''} \ar@<-1ex>[r]_-{h_{2}}
\ar@<1ex>[r]^-{h_{1}}
 & B' \ar@<.5ex>[u]^-{s'} \ar[l] \ar@{->>}[r]_-{h} & B \ar@<.5ex>[u]^-{s}}}
$$
in $\Pt(\C) $. Since $I$ preserves all coequalisers, we obtain a
reflexive graph in~$\Pt(\X) $ with its coequaliser
\begin{equation*}
\vcenter{\xymatrix@=30pt{ I(\Eq(g)) \ar@<-1ex>[r]_-{I(g_2)}
\ar@<.5ex>[d] \ar@<1ex>[r]^-{I(g_1)} & I(A') \ar[l] \ar@<.5ex>[d]
\ar@{->>}[r]^-{I(g)} & I(A)
\ar@<.5ex>[d]\\
I(\Eq(h)) \ar@<.5ex>[u] \ar@<-1ex>[r]_-{I(h_2)}
\ar@<1ex>[r]^-{I(h_1)}
 & I(B') \ar@<.5ex>[u] \ar[l] \ar@{->>}[r]_-{I(h)} & I(B). \ar@<.5ex>[u]}}
\end{equation*}
The inclusion $\X\to \C $ preserves regular epimorphisms (by
assumption) and kernel pairs, so this diagram is still a reflexive
graph with its coequaliser when considered in the category
$\Pt(\C) $. Indeed, if we take the (regular epimorphism,
monomorphism) factorisation of $\langle I(g_1),I(g_2)\rangle\colon
I(\Eq(g))\to I(A')\times I(A') $ in~$\X$, we get a reflexive
relation, say $\langle e_1,e_2 \rangle\colon E\to I(A')\times
I(A') $, and the coequaliser of $(e_1,e_2) $ is still $I(g) $.
Since $\X$ is a Barr-exact Mal'tsev category, $E $ is necessarily
the kernel pair of its coequaliser $I(g) $, as mentioned above.
Thus, the comparison ${I(\Eq(g))\to \Eq(I(g))}$ is a regular
epimorphism, and similarly for ${I(\Eq(h))\to \Eq(I(h))}$.

Now we pull back along $\eta_{B}$, $\eta_{B'}$, $\langle I(h_1),
I(h_2) \rangle \comp \eta_{\Eq(h)}$ and $\eta_{\Eq(h)}$ to obtain
the diagram
\begin{equation*}
\resizebox{\textwidth}{!}{ \xymatrix@!0@C=5em@R=4em{
\Eq(g)_{\triv} \cubepullbackplus \ar@{.>>}[rr] \ar[rd]
\ar@<.5ex>[dd]^(.75){\triv(f'')} && P \cubepullback \ar[rd]
\ar@<.5ex>[dd]|(.5){\hole} \ar@<1ex>[rr] \ar@<-1ex>[rr] &&
A'_{\triv} \cubepullbackminus \ar[rd] \ar@{.>>}[rr]
\ar@<.5ex>[dd]^(.75){\triv(f')}|(.47){\hole}|(.5){\hole}|(.53){\hole}
\ar[ll] && A_{\triv}\cubepullbackminus
\ar@<.5ex>[dd]^(.75){\triv(f)}|(.5){\hole} \ar[rd]
\\
& I(\Eq(g)) \ar@{>>}[rr] \ar@<.5ex>[dd]^(.25){I(f'')} && \Eq(I(g))
\ar@<.5ex>[dd] \ar@<1ex>[rr] \ar@<-1ex>[rr] && I(A')
\ar@{>>}[rr]^(.25){I(g)}
\ar@<.5ex>[dd]^(.25){I(f')} \ar[ll] & & I(A) \ar@<.5ex>[dd]^-{I(f)} \\
\Eq(h) \ar[rd]_-{\eta_{\Eq(h)}} \ar@<.5ex>[uu]
\ar@{=}[rr]|(.48){\hole}|(.52){\hole} && \Eq(h) \ar[rd]
\ar@<.5ex>[uu]|(.5){\hole} \ar@<1ex>[rr]|(.48){\hole}|(.52){\hole}
\ar@<-1ex>[rr]|(.48){\hole}|(.52){\hole} && B'
\ar@{>>}[rr]|(.48){\hole}|(.52){\hole}^(.25)h \ar[rd]_-{\eta_B'}
\ar@<.5ex>[uu]|(.47){\hole}|(.5){\hole}|(.53){\hole} \ar[ll]|(.48){\hole}|(.52){\hole} && B \ar[rd]_-{\eta_B} \ar@<.5ex>[uu]|(.5){\hole} \\
& I(\Eq(h)) \ar@<.5ex>[uu] \ar@{>>}[rr]
 && \Eq(I(h)) \ar@<.5ex>[uu] \ar@<1ex>[rr] \ar@<-1ex>[rr] && I(B') \ar@{>>}[rr]_-{I(h)} \ar[ll] \ar@<.5ex>[uu] && I(B); \ar@<.5ex>[uu]} }
\end{equation*}
we write $P=\Eq(h)\times_{\Eq(I(h))} \Eq(I(g)) $ to simplify
notation. Since the front left and right faces are regular
pushouts (Proposition 3.2 in~\cite{Bourn2003}), the dotted arrows
are regular epimorphisms by Lemma~\ref{lemma regular pushouts}.
Moreover, pullbacks preserve kernel pairs, so that $P$ must be the
kernel pair of the regular epimorphism $A'_{\triv} \to A_{\triv}$.
Consequently, $\triv{(f)}$, being the coequaliser of its kernel
pair, is also the coequaliser of the reflexive graph
$\triv{(f'')\rightrightarrows\triv(f')}$.
\end{proof}

\begin{proposition}\label{Proposition kernel pair}
Consider a reflexive graph and its coequaliser in $\Pt(\C) $
$$
\vcenter{\xymatrix@=30pt{ R \ar@<-1ex>[r] \ar@<.5ex>[d]^-{f''} \ar@<1ex>[r] & A' \ar[l] \ar@<.5ex>[d]^-{f'} \ar@{->>}[r]^-{g} & A \ar@<.5ex>[d]^-f\\
S \ar@<.5ex>[u]^-{s''} \ar@<-1ex>[r] \ar@<1ex>[r]
 & B' \ar@<.5ex>[u]^-{s'} \ar[l] \ar@{->>}[r]_-{h} & B, \ar@<.5ex>[u]^-{s}}}
$$
where $f''$ and $f'$ are split epimorphic trivial extensions. Then
$f $ is also a split epimorphic trivial extension.
\end{proposition}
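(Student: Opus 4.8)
The map $f$ is a split epimorphism by construction: it is the coequaliser, formed in $\Pt(\C)$, of the given reflexive graph, and the objects of $\Pt(\C)$ are precisely points, i.e.\ split epimorphisms equipped with a section. So the whole content of the statement is that $f$ is in addition a trivial extension, equivalently that the unit component $\eta^{\triv}_{f}\colon f\to\triv(f)$ of the trivialisation is an isomorphism in $\Pt(\C)$. The plan is to show that $\triv|_{\Pt(\C)}$ sends the coequaliser of the reflexive graph ${f''\rightrightarrows f'}$ to the coequaliser of ${\triv(f'')\rightrightarrows\triv(f')}$, and then to feed in the hypotheses that $f'$ and $f''$ are trivial.

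For the first step I would appeal to Lemma~\ref{lemma reflexive graphs}. That lemma is stated for coequalisers of effective equivalence relations, whereas here we are handed an arbitrary reflexive graph of points; the key observation is that its proof works just as well in the general case. Indeed, the equivalence-relation hypothesis is used only once: after applying $I$ and taking the (regular epimorphism, monomorphism) factorisation of the two structure maps in $\X$, one identifies the resulting reflexive relation with the kernel pair of the coequaliser $I(g)$. Since $\X$ is a Barr-exact Mal'tsev category, every reflexive relation there is automatically an effective equivalence relation, so this identification is available for an arbitrary reflexive graph; the comparison ${I(R)\to\R{I(g)}}$ (and likewise ${I(S)\to\R{I(h)}}$) is then a regular epimorphism, and the cube of regular pushouts, together with Lemma~\ref{lemma regular pushouts}, goes through unchanged. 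This yields ${\triv(f)=\coeq\bigl(\triv(f'')\rightrightarrows\triv(f')\bigr)}$.

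The conclusion is then formal. Because $f'$ and $f''$ are trivial extensions, the unit components $\eta^{\triv}_{f'}$ and $\eta^{\triv}_{f''}$ are isomorphisms, so by naturality of $\eta^{\triv}$ the morphism of reflexive graphs ${(f''\rightrightarrows f')\to(\triv(f'')\rightrightarrows\triv(f'))}$ is an isomorphism of diagrams. Taking coequalisers, the induced comparison ${f\to\triv(f)}$, which is exactly $\eta^{\triv}_{f}$, is an isomorphism; hence $f$ is a trivial extension, and being a split epimorphism it lies in $\s$. Note that both hypotheses are genuinely used here, one for each leg of the reflexive graph.

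I expect the only real obstacle to be the first step, namely justifying that $\triv$ preserves this coequaliser for a genuinely general reflexive graph. It is tempting instead to reduce at once to the kernel pair ${\R{g}\rightrightarrows A'}$ by Barr-exactness and to quote Lemma~\ref{lemma reflexive graphs} as stated; but then one would need the induced point ${\R{g}\to\R{h}}$ to be a trivial extension, which is not transparent from the hypotheses and would have to be extracted from the triviality of $f'$ by a regular-pushout argument in the spirit of Lemma~\ref{3x3}. Arguing directly with the original reflexive graph, as above, sidesteps this issue and uses the Mal'tsev property of $\X$ as the one essential ingredient; everything else is the naturality of the trivialisation together with $\triv(f')=f'$ and $\triv(f'')=f''$.
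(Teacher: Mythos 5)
There is a genuine gap in your first step. You claim that the proof of Lemma~\ref{lemma reflexive graphs} works verbatim for an arbitrary reflexive graph because the equivalence-relation hypothesis is ``used only once'', namely to identify the regular image of $I(\Eq(g))$ with $\Eq(I(g))$. That particular identification does indeed survive (the image of a reflexive graph in the Barr-exact Mal'tsev category $\X$ is an effective equivalence relation), but the hypothesis is used a second time, at the level of $\C$, in the concluding step of that proof: there one pulls the front faces of the cube back along the bottom \emph{kernel pair} and identifies $P=\Eq(h)\times_{\Eq(I(h))}\Eq(I(g))$ with the kernel pair of the regular epimorphism $A'_{\triv}\to A_{\triv}$, so that $\triv(f)$, being the coequaliser of its kernel pair, is the coequaliser of $\triv(f'')\rightrightarrows\triv(f')$. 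With a general graph $S\rightrightarrows B'$ in place of $\Eq(h)$, pulling back produces $S\times_{\Eq(I(h))}\Eq(I(g))$, which is \emph{not} the kernel pair of $A'_{\triv}\to A_{\triv}$; to conclude that the coequaliser of the trivialised graph is still $A_{\triv}$ you would need the comparison $S\to\Eq(h)$ (or its trivialised analogue) to be a regular epimorphism. Since $\C$ is only $\s$-protomodular, not Mal'tsev, the regular image of a reflexive graph in $\C$ is merely a reflexive relation and such comparisons can fail to be regular epimorphisms --- establishing exactly this kind of surjectivity is what most of the proof of Proposition~\ref{Proposition coequaliser of reflexive graphs} is devoted to. A telling symptom is that your argument never invokes $\s$-protomodularity, which the statement genuinely needs.

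The route you dismiss at the end as ``tempting'' but opaque is in fact the paper's proof. It first treats the case $R=\Eq(g)$, $S=\Eq(h)$ by Lemma~\ref{lemma reflexive graphs} \emph{as stated} (your formal final step, via $\triv(f')=f'$ and $\triv(f'')=f''$, then agrees with the paper's), and then reduces the general case to it by proving that the induced point $\Eq(g)\to\Eq(h)$ is a split epimorphic trivial extension. This is done by showing $\Eq(g)\cong P=\Eq(h)\times_{\Eq(I(h))}\Eq(I(g))$, a pullback of a fibration lying in $\X$, hence trivial by Proposition~\ref{Proposition Trivial implies normal}: the triviality of $f''$ realises $R$ as the pullback of $I(R)$ along $\eta_S$, a regular-pushout argument (Lemma~\ref{lemma regular pushouts} and the Barr-exact Mal'tsev property of $\X$) makes $\langle f'',\rho\rangle$ a regular epimorphism, and $\s$-protomodularity, via Proposition~\ref{subcategory of S-special objects is protomodular} and Remark~\ref{non-pointed S-protomodular}, makes the pair $(p_P,t)$ jointly strongly epimorphic, whence $g\comp p_1=g\comp p_2$ and $P$ is the kernel pair of $g$. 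So the content you hoped to sidestep is precisely the missing ingredient: as written, your generalisation of Lemma~\ref{lemma reflexive graphs} is unjustified, and repairing it would force you through essentially the paper's reduction.
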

\begin{proof}
We first consider the situation where $R=\Eq(g) $ and $S=\Eq(h) $
are the kernel pairs of $g $ and $h $, respectively. By
assumption, $f $ is the coequaliser of its kernel pair
$$
\vcenter{\xymatrix@=30pt{ \Eq(g) \ar@<-1ex>[r]
\ar@<.5ex>[d]^-{f''} \ar@<1ex>[r] & A' \ar[l] \ar@<.5ex>[d]^-{f'}
\ar@{->>}[r]^-{g} & A \ar@<.5ex>[d]^-f
\ar[r]^{\cong} & A_{\triv} \ar[ld]^-{\triv(f)} \\
\Eq(h) \ar@<.5ex>[u]^-{s''} \ar@<-1ex>[r] \ar@<1ex>[r]
 & B' \ar@<.5ex>[u]^-{s'} \ar[l] \ar@{->>}[r]_-{h} & B. \ar@<.5ex>[u]^-{s}}}
$$
But, applying Lemma~\ref{lemma reflexive graphs}, we conclude that
$\triv(f) $ is also its coequaliser, since $\triv(f')=f'$ and
$\triv(f'')=f''$. Thus $\triv(f) $ and $f $ are isomorphic, which
proves that $f $ is a trivial extension.

Now we prove that the above assumption can be made without any
loss of generality. Consider the diagram
$$
\resizebox{\textwidth}{!}{ \xymatrix@!0@C=5em@R=4em{ R
\cubepullback \ar[rr]^-{\rho} \ar@<.5ex>[dd]^-{f''}
\ar[dr]^-{\eta_R} && P \cubepullback \ar@<-1ex>[rr]_-{p_2} \ar[rd]
\ar@<.5ex>[dd]|(.5){\hole} \ar@<1ex>[rr]^-{p_1} && A'
\cubepullback \ar[rd]^-{\eta_{A'}} \ar[ll]
\ar@<.5ex>[dd]|(.47){\hole}|(.5){\hole}|(.53){\hole}^(.75){f'}
\ar@{->>}[rr]^-{g} && A \ar[rd]^-{\eta_A} \ar@<.5ex>[dd]^(.25)f|(.5){\hole}\\
& I(R) \ar@{>>}[rr]^(.25){\gamma} \ar@<.5ex>[dd]^(.25){I(f'')} &&
\Eq(I(g)) \ar@<-1ex>[rr] \ar@<.5ex>[dd] \ar@<1ex>[rr] && I(A')
\ar[ll]
\ar@<.5ex>[dd]^(.25){I(f')} \ar@{->>}[rr]^(.25){I(g)} && I(A) \ar@<.5ex>[dd]^-{I(f)}\\
S \ar[dr]_-{\eta_S} \ar@<.5ex>[uu]^-{s''}
\ar[rr]|(.48){\hole}|(.52){\hole} && \Eq(h) \ar[rd]
\ar@<.5ex>[uu]|(.5){\hole}
\ar@<-1ex>[rr]|(.48){\hole}|(.52){\hole}
\ar@<1ex>[rr]|(.48){\hole}|(.52){\hole}
 && B' \ar[rd]_-{\eta_{B'}} \ar@<.5ex>[uu]^(.25){s'}|(.47){\hole}|(.5){\hole}|(.53){\hole} \ar[ll]|(.48){\hole}|(.52){\hole}
 \ar@{->>}[rr]^(.25){h}|(.48){\hole}|(.52){\hole} && B \ar[rd]_-{\eta_B} \ar@<.5ex>[uu]|(.5){\hole}^(.75){s}\\
& I(S) \ar@{>>}[rr] \ar@<.5ex>[uu] && \Eq(I(h)) \ar@<.5ex>[uu]
\ar@<-1ex>[rr] \ar@<1ex>[rr]
 && I(B') \ar@<.5ex>[uu] \ar[ll] \ar@{->>}[rr]_-{I(h)} && I(B), \ar@<.5ex>[uu]} }
$$
where $P=\Eq(h)\times_{\Eq(I(h))} \Eq(I(g)) $. We shall prove that
$P$ is precisely the kernel pair of $g $, so that the induced
split epimorphism ${\Eq(g)\to \Eq(h)}$ is a trivial extension,
being a pullback of a fibration in $\X$
(Proposition~\ref{Proposition Trivial implies normal}).

For $P$ to be the kernel pair of $g $, we just need to show that
$g\comp p_1=g\comp p_2$, since the rest of the proof is
straightforward. As in the previous proof, the comparison
morphisms ${I(R)\to \Eq(I(g))}$ and ${I(S)\to \Eq(I(h))}$ are
regular epimorphisms, so that the front left square of the diagram
above is a regular pushout (Proposition~3.2 in~\cite{Bourn2003}).
Consequently, the comparison morphism
\[
\langle I(f''), \gamma \rangle\colon I(R) \to
I(S)\times_{\Eq(I(h))} \Eq(I(g))
\]
is a regular epimorphism and so is the comparison morphism
$\langle f'', \rho \rangle $ in
$$
\xymatrix@C=15pt@R=10pt{
 R \ar[rrr]^-{\rho} \ar@{.>>}[rd]|(.35){\langle f'', \rho \rangle} \ar@<.5ex>[ddd]^-{f''} & & & P \ar@<.5ex>[ddd] \\
 & S\times_{\Eq(h)} P \ar@<.5ex>[ddl] \ar[rru]^(.4){p_P} \\ \\
 S \ar[rrr] \ar@<.5ex>[uuu] \ar@<.5ex>[uur] & & & \Eq(h), \ar@<.5ex>[uuu]^-t
}
$$
as a pullback of $\langle I(f''), \gamma \rangle $. The split
epimorphism $\Eq(I(g)) \leftrightarrows \Eq(I(h)) $ belongs to $\s
$ by Proposition~\ref{subcategory of S-special objects is
protomodular}, and so does the split epimorphism $P
\leftrightarrows \Eq(h) $ by the assumption of stability under
pullbacks. Since $\C $ is an $\s $-protomodular category, the pair
$(p_P,t) $ is jointly strongly epimorphic, thus jointly epimorphic
(Remark~\ref{non-pointed S-protomodular}). Then, the pair $(\rho,
t) $ is jointly epimorphic, so we get $g\comp p_1=g\comp p_2$.
This finishes the proof.
\end{proof}

We have the following partial converse of
Proposition~\ref{Proposition kernel pair}.

\begin{proposition}\label{Real Proposition kernel pair}
Consider a morphism of points and its kernel pair in $\Pt(\C) $
$$
\vcenter{\xymatrix@=30pt{ \Eq(g) \ar@<-1ex>[r]_-{g_{2}}
\ar@<.5ex>[d]^-{f''} \ar@<1ex>[r]^-{g_{1}} & A' \ar[l]
\ar@<.5ex>[d]^-{f'} \ar[r]^-{g} & A
\ar@<.5ex>[d]^-f\\
\Eq(h) \ar@<.5ex>[u]^-{s''} \ar@<-1ex>[r]_-{h_{2}}
\ar@<1ex>[r]^-{h_{1}}
 & B' \ar@<.5ex>[u]^-{s'} \ar[l] \ar[r]_-{h} & B \ar@<.5ex>[u]^-{s}}}
$$
where $f $ and $f'$ are split epimorphic trivial extensions. Then
$f''$ is also a split epimorphic trivial extension.
\end{proposition}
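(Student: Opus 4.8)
The plan is to establish that $f''$ is a trivial extension; that it is a split epimorphism is immediate from the section $s''$, so this is the only genuine content, and for it I would verify directly that the unit-naturality square of $f''$ is a pullback. The first step is to unpack the hypotheses on $f'$ and $f$. By Proposition~\ref{Proposition Trivial implies normal} the triviality of $f'$ exhibits $A'$ as the pullback $B'\times_{I(B')}I(A')$ of $\eta_{B'}$ along $I(f')$, and likewise $A\cong B\times_{I(B)}I(A)$. Naturality of $\eta$, together with the fact that $(g,h)$ is a morphism of points, then identifies $g$ with the morphism induced between these two pullbacks by the triple $(h,I(h),I(g))$. Since kernel pairs are limits and limits commute with limits, applying $\Eq(-)$ yields a canonical isomorphism
\[
\Eq(g)\;\cong\;\Eq(h)\times_{\Eq(I(h))}\Eq(I(g)).
\]
Concretely, this is the pullback square whose bottom edge is $c_{h}\comp\eta_{\Eq(h)}\colon\Eq(h)\to\Eq(I(h))$, with $c_{h}\colon I(\Eq(h))\to\Eq(I(h))$ the canonical comparison, whose right edge is the morphism $\tilde{f}\colon\Eq(I(g))\to\Eq(I(h))$ induced on kernel pairs inside $\X$, whose left edge is $f''$, and whose top edge is the canonical map $\phi\colon\Eq(g)\to\Eq(I(g))$, which moreover factors as $\phi=c_{g}\comp\eta_{\Eq(g)}$.

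The conceptual key is to observe that $\tilde{f}$ is itself a trivial extension. Indeed, $\X$ is closed under finite limits, so $\Eq(I(g))$ and $\Eq(I(h))$ lie in $\X$, and $\tilde{f}$ is a split epimorphism between them. As $\X$ is contained in the protomodular core $\s\C$, Proposition~\ref{subcategory of S-special objects is protomodular} shows $\tilde{f}\in\s$, i.e.\ $\tilde{f}$ is a split epimorphic trivial extension. By Proposition~\ref{Proposition Trivial implies normal}, $I$ therefore preserves the pullback displayed above.

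Next I would apply $I$ to that pullback. Because $\X$ is reflective with fully faithful inclusion, the unit is invertible on the objects $\Eq(I(g))$ and $\Eq(I(h))$ of $\X$; under these isomorphisms $I(\tilde{f})$ becomes $\tilde{f}$, while the triangle identities turn $I(c_{h}\comp\eta_{\Eq(h)})$ into $c_{h}$ and $I(c_{g}\comp\eta_{\Eq(g)})$ into the comparison $c_{g}\colon I(\Eq(g))\to\Eq(I(g))$. The upshot is that the square
\[
\xymatrix@=30pt{ I(\Eq(g)) \ar[r]^-{c_{g}} \ar[d]_-{I(f'')} & \Eq(I(g)) \ar[d]^-{\tilde{f}}\\ I(\Eq(h)) \ar[r]_-{c_{h}} & \Eq(I(h)) }
\]
is a pullback. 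Finally, since the top and bottom edges of the defining pullback factor through the units as $\phi=c_{g}\comp\eta_{\Eq(g)}$ and $c_{h}\comp\eta_{\Eq(h)}$, that square is the horizontal composite of the unit-naturality square of $f''$ (with horizontal edges $\eta_{\Eq(g)}$ and $\eta_{\Eq(h)}$) followed by the square just obtained. The outer rectangle and its right-hand square are both pullbacks, so by the pullback cancellation lemma the left-hand square is a pullback as well; this is exactly the assertion that $f''$ is a trivial extension, which completes the argument.

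The main obstacle I anticipate is not a single hard computation but the careful identification underlying the first and third steps: pinning down the isomorphism $\Eq(g)\cong\Eq(h)\times_{\Eq(I(h))}\Eq(I(g))$ and matching the edges of its image under $I$ with the comparison morphisms $c_{g}$ and $c_{h}$ demands attentive bookkeeping with the units and the triangle identities. The genuinely decisive point, by contrast, is the recognition that $\tilde{f}$ is a trivial extension: this is what licenses pushing $I$ through the pullback via Proposition~\ref{Proposition Trivial implies normal}, and it is precisely here that the $\s$-protomodularity of $\C$, through Proposition~\ref{subcategory of S-special objects is protomodular}, enters.
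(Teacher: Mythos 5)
Your argument is correct, but it takes a genuinely different route from the paper. The paper's proof is a one-liner: the kernel pair of the morphism of points $(g,h)$ is a finite limit in $\Pt(\C)$ of points lying in $\SPt(\C)$, and condition (2) of Definition~\ref{S-protomodular category} says precisely that $\SPt(\C)$ is closed under finite limits in $\Pt(\C)$; since $\s$ is the class of split epimorphic trivial extensions, $(f'',s'')\in\SPt(\C)$ and we are done. You instead verify triviality of $f''$ by hand: using the pullback presentations of the trivial extensions $f'$ and $f$ and interchange of limits, you exhibit $\Eq(g)\cong\Eq(h)\times_{\Eq(I(h))}\Eq(I(g))$ with left edge $f''$ and right edge $\tilde{f}$, and then deduce that the unit square of $f''$ is a pullback. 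Two remarks on your version. First, it is longer than it needs to be: once you have exhibited $f''$ as a pullback of $\tilde{f}$, which is a fibration lying in $\X$, Proposition~\ref{Proposition Trivial implies normal} (``a fibration is a trivial extension if and only if it is a pullback of some fibration in $H(\X)$'') concludes immediately, so your final two steps---pushing $I$ through the pullback and applying pullback cancellation---re-prove by hand a part of that proposition; likewise the detour through Proposition~\ref{subcategory of S-special objects is protomodular} to see $\tilde{f}\in\s$ is dispensable, since any fibration in $\X$ is a pullback of itself along an identity and hence trivial. Second, and more interestingly, with these simplifications your argument never uses $\s$-protomodularity at all: it establishes the statement for any admissible Galois structure of the type fixed in Section~\ref{Section S-application} (reflective $\X$ closed under finite limits, (G1)--(G6)). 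So the paper's proof buys brevity by exploiting the standing $\s$-protomodularity hypothesis, while yours, suitably trimmed, buys generality at the cost of the limit-interchange bookkeeping you correctly identify as the delicate point.
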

\begin{proof}
This follows from the finite limit closure in the definition of
$\s $-protomodul\-arity (Definition~\ref{S-protomodular
category}).
\end{proof}

Since the class $\s $ we are considering is the class of split
epimorphic trivial extensions, then the $\s $-special regular
epimorphisms are precisely the normal extensions with respect to
the Galois structure $\Gamma $ (Definition~\ref{trivial, central,
normal}). We are now ready to prove that condition (G7${}^{-}$)
holds.

\begin{proposition}\label{Proposition coequaliser of reflexive graphs} The category of $\s$-special regular epimorphisms
is closed in $\Arr(\C)$ under coequalisers of reflexive graphs,
when they are of the type considered in condition {\rm
(G7${}^{-}$)}.
\end{proposition}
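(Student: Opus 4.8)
The plan is to deduce everything from Proposition~\ref{Proposition kernel pair}. Because $\s$ is the class of split epimorphic trivial extensions, the assertion that the coequaliser $f\colon A\to B$ is an $\s$-special regular epimorphism (a normal extension) is, by definition, the assertion that the point $(\Eq(f)\to A)$ --- first kernel pair projection, with the diagonal as splitting --- lies in $\s$, i.e.\ is a split epimorphic trivial extension. So it suffices to exhibit $\Eq(f)\to A$ as such.

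First I would take kernel pairs in the vertical direction. Let $d,c\colon R\to A'$ and $e\colon A'\to R$ be the structural maps of the top reflexive graph, so that $f'\comp d$ and $f'\comp c$ are $f''$ followed by the two projections of $\Eq(h)$, and $f''\comp e$ is $f'$ followed by the diagonal of $\Eq(h)$. Then $d$, $c$ and $e$ each carry $\Eq(f'')$ into $\Eq(f')$, producing a reflexive graph $(\Eq(f'')\to R)\rightrightarrows(\Eq(f')\to A')$ in $\Pt(\C)$ lying over $R\rightrightarrows A'$, whose structural split epimorphisms are the first projections. Since $f'$ and $f''$ are normal extensions, their kernel pair projections $\Eq(f')\to A'$ and $\Eq(f'')\to R$ are trivial extensions, so both vertices of this reflexive graph belong to $\s$. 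By Proposition~\ref{Proposition kernel pair} (which applies to an arbitrary reflexive graph in $\Pt(\C)$) its coequaliser in $\Pt(\C)$ is again a split epimorphic trivial extension, and its base is $\coeq(R\rightrightarrows A')=A$.

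It then remains to identify this coequaliser with the point $\Eq(f)\to A$. Since $g\comp d=g\comp c$, the map $\langle g\comp\pi_1,g\comp\pi_2\rangle\colon\Eq(f')\to\Eq(f)$, where $\pi_1,\pi_2$ are the projections of $\Eq(f')$, coequalises the reflexive graph and hence induces a comparison $\varphi$ from the coequaliser to $\Eq(f)$, over $A$; I must show $\varphi$ is an isomorphism, and this interchange of kernel pair with coequaliser is the hard part. I expect it to follow from Barr-exactness together with the regular pushout machinery of Lemmas~\ref{lemma regular pushouts} and~\ref{3x3}: using that the bottom row $\Eq(h)\rightrightarrows B'$ is a kernel pair and that $f'$, $f''$ are regular epimorphisms, the square with top $g$, bottom $h$ and sides $f'$, $f$ should be a regular pushout. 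As in the proofs of Lemma~\ref{lemma reflexive graphs} and Proposition~\ref{Proposition kernel pair}, I would establish this by applying the reflector $I$, invoking that $\X$ is a Barr-exact Mal'tsev category so that the $I$-image square is a regular pushout, and transporting the conclusion back to $\C$ through Proposition~\ref{subcategory of S-special objects is protomodular} and the joint strong epimorphicity coming from $\s$-protomodularity (Remark~\ref{non-pointed S-protomodular}). Regularity of the pushout makes $\langle g\comp\pi_1,g\comp\pi_2\rangle$ a regular epimorphism, whence $\varphi$ is a regular epimorphism; and a Barr-exactness argument (the bottom being a kernel pair and $f''$ a regular epimorphism) shows that the reflexive graph $\Eq(f'')\rightrightarrows\Eq(f')$ generates precisely the kernel pair relation of $\langle g\comp\pi_1,g\comp\pi_2\rangle$, so that $\varphi$ is also a monomorphism and therefore an isomorphism. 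With $\Eq(f)\to A$ thus realised as the coequaliser above, Proposition~\ref{Proposition kernel pair} yields $\Eq(f)\to A\in\s$, which is exactly the required normality of $f$.
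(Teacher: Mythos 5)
Your opening reduction coincides with the paper's: take kernel pairs, regard $\bigl(\Eq(f'')\to R\bigr)\rightrightarrows\bigl(\Eq(f')\to A'\bigr)$ as a reflexive graph in $\Pt(\C)$ whose two vertices lie in $\s$ because $f''$ and $f'$ are normal, and feed its levelwise coequaliser to Proposition~\ref{Proposition kernel pair}; the entire content of the statement then sits in identifying that coequaliser with $\bigl(\Eq(f)\to A\bigr)$, which is what the paper calls points (1) and (2). It is exactly there that your sketch has two genuine gaps. For the epimorphism half you assert that the square with top $g$, bottom $h$ and sides $f'$, $f$ is a regular pushout, to be proved ``as in Lemma~\ref{lemma reflexive graphs} and Proposition~\ref{Proposition kernel pair}'' by reflecting into $\X$ and transporting back. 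But the transport in those proofs depends essentially on the vertical maps being \emph{split} epimorphisms whose points lie in $\s$: the conclusion is carried back to $\C$ through jointly strongly epimorphic pairs of the form $(p_P,t)$ supplied by $\s$-protomodularity (Remark~\ref{non-pointed S-protomodular}). Here $f'$ is a normal extension with no splitting, $\C$ is not Mal'tsev, so Carboni--Kelly--Pedicchio-style automatic regular pushouts are unavailable, and you give no substitute. The paper obtains the surjectivity of $\overline{g}=\langle g\comp\pi_1,g\comp\pi_2\rangle$ by a different idea entirely absent from your sketch: it shows that the coequaliser $Q$ of $\Eq(f'')\rightrightarrows\Eq(f')$ is an internal \emph{groupoid} on $A$ --- Proposition~\ref{Proposition kernel pair} makes it an $\s$-reflexive graph, Proposition~7.5 of \cite{S-proto} (Proposition~4.9 of \cite{Bourn2014}) reduces groupoidness to $\Eq(d)$ and $\Eq(c)$ centralising each other, and this centralisation is deduced from Lemma~\ref{3x3} together with Proposition~1.6.4 of \cite{Borceux-Bourn} --- and then uses exactness to identify the image of this groupoid with $\Eq(f)$.

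The monomorphism half is the larger gap. That the kernel pair of the coequaliser $q\colon\Eq(f')\to Q$ is the equivalence relation generated by the image of the graph is indeed automatic in a Barr-exact category; the real content is the reverse inclusion $\Eq(\overline{g})\leq\Eq(q)$, and ``the bottom being a kernel pair and $f''$ being a regular epimorphism'' does not yield it. This is precisely the paper's point (2): by interchange of limits $\Eq(\overline{g})=\R{f'''}$ for $f'''\colon\Eq(g)\to\Eq(h)$, and one must show that the comparison $R\to\Eq(g)$ --- from a mere reflexive graph into the kernel pair of its coequaliser, in a non-Mal'tsev category --- is a regular epimorphism. The paper proves this by constructing a groupoid quotient $R'$ of $R$ as a pullback of the groupoid reflections $\grd(I(R))$ and $\grd(I(S))$ in the exact Mal'tsev category $\X$, using that the reflection of reflexive graphs to groupoids there is Birkhoff (Corollary~3.15 of \cite{P0} with Theorem~3.1 of \cite{Gran:Internal}), Theorem~5.7 of \cite{Carboni-Kelly-Pedicchio}, and Lemma~\ref{lemma regular pushouts}. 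It is only in this step that the restriction $S=\Eq(h)$ from condition (G7${}^{-}$) is actually needed --- a hypothesis your argument never invokes, which is a symptom of the missing content. Until these two claims are supplied with proofs, your argument is incomplete exactly at its crux.
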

\begin{proof}
Consider a reflexive graph of regular epimorphisms and its
coequaliser in $\C$ as in the solid part of the diagram in
Figure~\ref{Figure (G7) monoids}. Assume that $S$ is an
equivalence relation, so that $S = \Eq(h)$. We prove that, if
$f''$ and $f'$ are $\s$-special regular epimorphisms, then also
$f$ is an $\s$-special regular epimorphism.

\begin{figure}[h]
\(\xymatrix@=30pt{ \Eq(f'') \ar@{.>}@<-1ex>[d] \ar@{.>}@<1ex>[d]
\ar@{.>}@<-1ex>[r] \ar@{.>}@<1ex>[r] & R \ar@{.>}[l] \ar@<-1ex>[d]
\ar@{->>}[r]^-{f''} \ar@<1ex>[d] &
 \Eq(h) \ar@<-1ex>[d] \ar@<1ex>[d] \\
\R{f'} \ar@{.>}[u] \ar@{.>}[d]_{\overline{g}} \ar@{.>}@<-1ex>[r]
\ar@{.>}@<1ex>[r] & A' \ar@{.>}[l] \ar[u] \ar@{->>}[r]^-{f'}
\ar@{->>}[d]_-{g} & B' \ar[u]
\ar@{->>}[d]^-{h} \\
\R{f} \ar@{.>}@<-1ex>[r] \ar@{.>}@<1ex>[r] & A \ar@{.>}[l]
\ar@{->>}[r]_-f & B}\) \caption{Closedness of $\s $-special
regular epimorphisms under coequalisers of certain reflexive
graphs}\label{Figure (G7) monoids}
\end{figure}

Taking kernel pairs to the left, we want to use
Proposition~\ref{Proposition kernel pair} together with the fact
that $\s$-special regular epimorphisms are precisely normal
extensions to show that the kernel pair projections of $f$ are
trivial extensions. For this argument to be valid, we need to show
that: (1) $\overline{g}$ is a regular epimorphism; and (2) it is
the coequaliser of the pair of vertical arrows
$\R{f''}\rightrightarrows \R{f'}$.

We may deduce (1) that $\overline{g}$ is a regular epimorphism
from the fact that the coequaliser of $\R{f''}\rightrightarrows
\R{f'}$
$$
\xymatrix@=30pt{
 \Eq(f'') \ar@<1ex>[r] \ar@<-1ex>[r] \ar@<1ex>[d] \ar@<-1ex>[d] & R \ar@<1ex>[d] \ar@<-1ex>[d] \ar[l]\\
 \Eq(f') \ar@<1ex>[r]^-{f'_1} \ar@<-1ex>[r]_-{f'_2} \ar@{>>}[d] \ar[u] & A' \ar@{>>}[d]^-g \ar[l] \ar[u] \\
 Q \ar@<1ex>[r]^-d \ar@<-1ex>[r]_-c & A \ar[l]}
$$
is an internal groupoid on $A$. Indeed, by
Proposition~\ref{Proposition kernel pair}, it is an $\s$-reflexive
graph since $d$ is a split epimorphic trivial extension. Thanks to
Proposition~7.5 in~\cite{S-proto} (and to its extension to the
non-pointed context, see Proposition $4.9$ in~\cite{Bourn2014}),
it suffices then to show that the kernel pairs $\Eq(d)$ and
$\Eq(c)$ centralise each other. The kernel pairs $\Eq(f'_1)$ and
$\Eq(f'_2)$ centralise each other, since $\Eq(f')$ is an
equivalence relation. By Lemma~\ref{3x3}, $\Eq(d)$ (resp.\
$\Eq(c)$) is the regular image of $\Eq(f'_1)$ (resp.\
$\Eq(f'_2)$), so that $\Eq(d)$ and $\Eq(c)$ centralise each other
too (Proposition 1.6.4 in~\cite{Borceux-Bourn}). Hence the regular
image of this internal groupoid is an equivalence relation, so a
kernel pair, with coequalizer $f$, which makes it isomorphic
to~$\Eq(f)$.

Observe that, in the proof of (1), we do not need $S$
to be an equivalence relation.

For the proof of (2), write $f'''\colon {\Eq(g) \to \Eq(h)}$ for
the kernel pair of $(g,h)$. Taking kernel pairs to the left, we
obtain the kernel pair projections $\R{f'''}\rightrightarrows
\Eq(g)$. Note that $\R{f'''}$ is actually the kernel pair of
$\overline{g}$ by interchange of limits. We claim that the
comparison $R\to \Eq(g)$ is a regular epimorphism. Hence, by
pullback, so is the comparison $\R{f''}\to \R{f'''}$, which
finishes the proof of (2).

We are left with proving our claim that $R\to \Eq(g)$ is a regular
epimorphism. We do so by showing that there is a quotient $R'$ of
$R $ which is a groupoid, so that the ``image'' of the reflexive
graph $R $ is an (effective) equivalence relation
(namely~$\Eq(g)$). The
groupoid $R'$ is obtained as a pullback of groupoids like in the
diagram
$$
%\resizebox{\textwidth}{!}{
\xymatrix@!0@C=5em@R=4em{ R \cubepullback \ar[rr]^-{\rho}
\ar@{>>}[dd]^-{f''} \ar[dr]^-{\eta_R} && R' \cubepullback
\ar@<-1ex>[rr]_-{p_2} \ar[rd] \ar@{>>}[dd]|(.5){\hole}
\ar@<1ex>[rr]^-{p_1} && A' \cubepullback \ar[rd]^-{\eta_{A'}}
\ar[ll]
\ar@{>>}[dd]|(.47){\hole}|(.5){\hole}|(.53){\hole}^(.75){f'}
\\
& I(R) \ar@{>>}[rr] \ar@{>>}[dd]^(.25){I(f'')} && \grd(I(R))
\ar@<-1ex>[rr] \ar@{>>}[dd] \ar@<1ex>[rr] && I(A') \ar[ll]
\ar@{>>}[dd]^(.25){I(f')} \\
\Eq(h) \ar[dr]_-{\eta_S} \ar@{=}[rr]|(.5){\hole} && \Eq(h) \ar[rd]
\ar@<-1ex>[rr]|(.5){\hole} \ar@<1ex>[rr]|(.5){\hole}
 && B' \ar[rd]_-{\eta_{B'}}  \ar[ll]|(.5){\hole}
 \\
& I(S) \ar@{>>}[rr] && \grd(I(S)) \ar@<-1ex>[rr] \ar@<1ex>[rr]
 && I(B')  \ar[ll] }
$$
where $\grd(I(R))$ and $\grd(I(S))$ are the groupoids associated
with the reflexive graphs $I(R)$ and $I(S)$, respectively. Since
$\X$ is a Barr-exact Mal'tsev category, the reflection of
reflexive graphs to groupoids is Birkhoff (Corollary~3.15
in~\cite{P0} combined with Theorem~3.1 in~\cite{Gran:Internal}),
so that (keeping Theorem~5.7 in~\cite{Carboni-Kelly-Pedicchio} in
mind) the front left square is a regular pushout. The morphism
$\rho$ is now a regular epimorphism by Lemma~\ref{lemma regular
pushouts}.
\end{proof}

\begin{corollary}\label{Proposition coequaliser of reflexive graphs Restricted} The category of $\s$-special regular epimorphisms in $\C$
is closed in $\Arr(\C)$ under coequalisers of equivalence
relations.\noproof
\end{corollary}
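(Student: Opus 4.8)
The plan is to read the Corollary off from Proposition~\ref{Proposition coequaliser of reflexive graphs}, by observing that an equivalence relation in $\Arr(\C)$ is a particularly well-behaved instance of the reflexive graphs treated there. The only extra ingredient is the Barr-exactness of $\C$, used to see that the induced equivalence relation on codomains is effective.

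First I would recall that $\Arr(\C)$ is the functor category $\C^{\mathbf{2}}$, so that all its coequalisers are computed levelwise. An internal equivalence relation in $\Arr(\C)$ on the $\s$-special regular epimorphism $f'\colon A'\to B'$ is, levelwise, an equivalence relation $R\rightrightarrows A'$ on its domain together with a compatible equivalence relation $S\rightrightarrows B'$ on its codomain; the relation object is the induced arrow $f''\colon R\to S$, which is again an $\s$-special regular epimorphism by hypothesis. Its levelwise coequaliser yields the quotients $g\colon A'\to A$ and $h\colon B'\to B$ and an induced arrow $f\colon A\to B$, which is precisely the solid part of the diagram in Figure~\ref{Figure (G7) monoids}.

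Next I would invoke Barr-exactness of $\C$: every equivalence relation is effective, so the codomain relation $S\rightrightarrows B'$ coincides with the kernel pair $\Eq(h)$ of its coequaliser. This places our reflexive graph squarely within the class treated in condition~(G7${}^{-}$), whose characteristic feature is exactly that the bottom row be of the form $\Eq(h)\rightrightarrows B'\to B$; the top row $R\rightrightarrows A'$ is here even an equivalence relation, which is stronger than the plain reflexive graph the Proposition permits, but this costs nothing. Since all the horizontal maps are regular epimorphisms in the regular category $\C$, Proposition~\ref{Proposition coequaliser of reflexive graphs} applies verbatim and shows $f$ to be an $\s$-special regular epimorphism. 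There is no real obstacle here: the single point to verify is the elementary fact that effectiveness of the codomain equivalence relation in the Barr-exact category $\C$ brings us into the scope of~(G7${}^{-}$), after which the work already carried out in Proposition~\ref{Proposition coequaliser of reflexive graphs} does everything.
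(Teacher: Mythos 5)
Your proposal is correct and is precisely the paper's intended argument: the corollary carries no proof because, by Barr-exactness of $\C$, the equivalence relation $S$ on $B'$ is effective, so $S=\Eq(h)$ for $h$ its coequaliser and the (levelwise-computed) coequaliser diagram in $\Arr(\C)$ is exactly of the type in condition (G7${}^{-}$), whence Proposition~\ref{Proposition coequaliser of reflexive graphs} applies---indeed its proof opens with the very assumption ``$S$ is an equivalence relation, so that $S=\Eq(h)$''. Your side remarks (that the top row being an equivalence relation is stronger than needed, and that $f''\colon R\to S$ is $\s$-special by hypothesis) are accurate and complete the reduction.
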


Theorem~\ref{Theorem Reflectiveness (G7minus)} now implies the
main result of this section.

\begin{theorem}\label{Reflectiveness general monoids}
$\s $-special regular epimorphisms are reflective amongst regular
epimorphisms.\noproof
\end{theorem}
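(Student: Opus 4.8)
The plan is to observe that this theorem is an immediate consequence of Theorem~\ref{Theorem Reflectiveness (G7minus)} once the vocabulary has been matched up. First I would recall the identification noted just before Proposition~\ref{Proposition coequaliser of reflexive graphs}: since $\s$ is taken to be the class of split epimorphic trivial extensions, a regular epimorphism $f$ is $\s$-special precisely when its kernel pair projections are trivial extensions, which is exactly the defining condition (Definition~\ref{trivial, central, normal}) for $f$ to be a \emph{normal extension} with respect to~$\Gamma$. Hence the category $\Norm(\C)$ of normal extensions and the category of $\s$-special regular epimorphisms coincide; likewise, by (G6), the fibrations are exactly the regular epimorphisms in~$\C$. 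So the statement to be proved is just the conclusion of Theorem~\ref{Theorem Reflectiveness (G7minus)} phrased in the $\s$-protomodular language.

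Next I would verify that the hypotheses of Theorem~\ref{Theorem Reflectiveness (G7minus)} are all in place. Admissibility of $\Gamma$ is assumption~(3) at the opening of this section; condition (G6) holds by the very choice of $\E$ and $\F$ as the classes of regular epimorphisms in $\C$ and $\X$; and condition (G7${}^{-}$) is precisely the content of Proposition~\ref{Proposition coequaliser of reflexive graphs}, whose reflexive graphs are exactly of the shape required in (G7${}^{-}$). With these three facts established, Theorem~\ref{Theorem Reflectiveness (G7minus)} yields, for every object $B$ in $\C$, that $\Norm(B)$ is reflective in $(\E\downarrow B)$, and globally that $\Norm(\C)$ is reflective in $\Fib(\C)$. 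Translating back through the identification of the previous paragraph gives the claim: $\s$-special regular epimorphisms are reflective amongst regular epimorphisms.

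There is essentially no remaining obstacle, since all the genuine work has already been done in Proposition~\ref{Proposition coequaliser of reflexive graphs} (the verification of G7${}^{-}$) together with its supporting results, Lemmas~\ref{lemma regular pushouts}, \ref{3x3} and \ref{lemma reflexive graphs} and Proposition~\ref{Proposition kernel pair}. The one point deserving a moment's care is confirming that the reflexive graphs arising in (G7${}^{-}$) really are of the restricted type treated in Proposition~\ref{Proposition coequaliser of reflexive graphs}, so that the closure statement applies verbatim; but this matching is already built into the formulation of both statements, so the proof reduces to citing Theorem~\ref{Theorem Reflectiveness (G7minus)}.
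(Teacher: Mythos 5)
Your proposal is correct and follows exactly the paper's own route: the authors also obtain this theorem as an immediate corollary of Theorem~\ref{Theorem Reflectiveness (G7minus)}, using the identification of $\s$-special regular epimorphisms with normal extensions (stated just before Proposition~\ref{Proposition coequaliser of reflexive graphs}) and the fact that Proposition~\ref{Proposition coequaliser of reflexive graphs} is precisely the verification of (G7${}^{-}$), with admissibility and (G6) holding by the standing assumptions of the section. Nothing is missing.
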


We conclude this section by observing that the criterion for
reflectiveness of normal extensions given by Theorem $4.2$
in~\cite{JK:Reflectiveness} cannot be applied to obtain the
theorem above in our general framework, since we are not supposing
that the category $\C $ admits the colimits that are needed to
apply that theorem.

%%%%%%%%%%%%%%%%%%%%%%%%%%%%%%%%%%%%%%%%%%%%%%%%%%%%%%%%%%%%%%%%%%%

\section{Examples}
\label{Examples} In this section we describe some concrete
examples of the general framework developed in the previous one.

\subsection{Monoids and groups}
The first example we consider is the following: $\C = \Mon $ is
the category of monoids, and $\X = \Grp $ is the subcategory of
groups. The reflection $\gp \colon \Mon \to \Grp $ is given by the
\defn{Grothendieck group} (or \defn{group completion})~\cite{Maltsev:RingCompletion, Maltsev:GroupCompletion,
Maltsev:GroupCompletionII}: given a monoid $(M,\cdot,1) $, its
group completion $\gp(M) $ is defined by
\[
\gp(M)=\frac{\gp\free (M)}{\normal(M)},
\]
where $\gp \free (M) $ denotes the free group on $M$ and
$\normal(M) $ is the normal subgroup generated by elements of the
form $[m_{1}][m_{2}][m_{1}\cdot m_{2}]^{-1}$. By choosing the
classes of morphisms $\E $ and~$\F $ to be the surjections in
$\Mon $ and $\Grp $, respectively, we obtain a Galois structure
\begin{equation*}
\label{the adjunction}
 \Gamma_{\mon}=(\Mon,\Grp, \gp, \mon, \eta, \epsilon, \E, \F),
\end{equation*}
where $\mon $ is just the inclusion functor from $\Grp $ to $\Mon
$. This Galois structure was studied in~\cite{MRVdL}, where it was
shown to be admissible (Theorem~2.2 there). Moreover, trivial,
normal and central extensions were characterised for this Galois
structure. Let us briefly recall what they are.

\begin{definition}[\cite{SchreierBook}, Definition $2.1.1$] \label{mrvdl2:homogeneous split epi}
Let $f $ be a split epimorphism of monoids, with a chosen
splitting $\s $, and~$N$ its (canonical) kernel
\begin{equation*}
%\label{mrvdl2:k,f,s}
 \xymatrix{ N \ar@{{ |>}->}[r]_-{k} & A \ar@<-.5ex>[r]_-{f} & B. \ar@<-.5ex>[l]_-{s} }
\end{equation*}
The split epimorphism $(f,s) $ is said to be \defn{right
homogeneous} when, for every element $b \in B$, the function
$\mu_b \colon N \to f^{-1}(b) $ defined through multiplication on
the right by $s (b) $, so $\mu_b(n) = n \, s(b) $, is bijective.
Similarly, we can define a
\defn{left homogeneous} split epimorphism: the function $N \to
f^{-1}(b)\colon n\mapsto s(b)\, n $ is a bijection for all $b\in
B$. A~split epimorphism is said to be
\defn{homogeneous} when it is both right and left homogeneous.
\end{definition}

As observed in~\cite{SchreierBook}, Proposition $2.1.3$, a split
epimorphism is right homogeneous if and only if it is a Schreier
split epimorphism (Definition~\ref{Def: Schreier split epi}).

\begin{definition}[\cite{SchreierBook}, Definition $7.1.1$] \label{mrvdl2:special homogeneous surj}
Given a surjective homomorphism $g $ of monoids and its kernel
pair
\begin{equation*}
\label{mrvdl2:epi g}
 \xymatrix{ \R{g} \ar@<1ex>[r]^-{\pi_1} \ar@<-1ex>[r]_-{\pi_2} & A \ar[l]|-{\Delta} \ar@{-{>>}}[r]^-g & B, }
\end{equation*}
the morphism $g $ is called a \defn{special homogeneous
surjection} when $(\pi_1, \Delta) $ (or, equivalently,
$(\pi_2,\Delta) $) is a homogeneous split epimorphism.
\end{definition}

\begin{proposition}[\cite{MRVdL}, Proposition $4.2$]
\label{mrvdl2:Proposition: trivial split extensions} For a split
epimorphism $f $ of monoids, the following statements are
equivalent:
\begin{tfae}
 \item $f $ is a trivial extension;
 \item $f $ is a special homogeneous surjection.\noproof
\end{tfae}
\end{proposition}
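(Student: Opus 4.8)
The plan is to characterise trivial extensions via Proposition~\ref{Proposition Trivial implies normal}: a split epimorphism $f\colon A\to B$ is a trivial extension precisely when it is, up to isomorphism, a pullback of a surjective homomorphism of groups. Before either implication I would record a convenient reformulation of condition~(ii). Writing $N=f^{-1}(1)$ for the kernel of $f$, the split epimorphism $f$ is a special homogeneous surjection if and only if, for every $a\in A$, both $n\mapsto na$ and $n\mapsto an$ restrict to bijections $N\to f^{-1}(f(a))$; this is exactly what it means for the kernel pair projection $\pi_1$ (split by the diagonal, with kernel $\{1\}\times N\cong N$) to be homogeneous in the sense of Definition~\ref{mrvdl2:homogeneous split epi}. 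Specialising to $a\in N$ shows that left and right multiplication by any element of $N$ are bijections of $N$, so that $N$ is a group; specialising to the section values $a=s(b)$ recovers ordinary homogeneity of $f$. Conversely, using the Schreier decomposition $a=n_0\,s(f(a))$ together with the fact that $N$ is a group, one checks that this reformulation is equivalent to: $f$ is a homogeneous split epimorphism whose kernel $N$ is a group.

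For the implication (i)$\To$(ii), suppose $f$ is a trivial extension. By Proposition~\ref{Proposition Trivial implies normal}, $f$ is the pullback of a surjective group homomorphism $q\colon G\to H$ along a homomorphism $\varphi\colon B\to H$, so that $A=B\times_H G$ with componentwise multiplication and $f$ the first projection. Then $N\cong\ker(q)$ is a subgroup of $G$, hence a group, and for $a=(b,g)\in A$ right multiplication sends $(1,n)$ to $(b,ng)$; since $G$ is a group this bijects $N$ with the fibre $f^{-1}(b)=\{b\}\times q^{-1}(\varphi(b))$, and similarly on the left. Thus $f$ is a homogeneous split epimorphism with group kernel, which by the reformulation is condition~(ii).

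For the converse (ii)$\To$(i), I would reconstruct $f$ as such a pullback. Granting the structural step that a homogeneous split epimorphism with group kernel $N$ is a genuine semidirect product $A\cong N\rtimes B$ — with $B$ acting on $N$ by automorphisms and with trivial factor set — the $B$-action extends along $\eta_B\colon B\to\gp(B)$ to a $\gp(B)$-action, since $\mathrm{Aut}(N)$ is a group and $\gp(B)$ is the universal group on $B$. This produces a group $G=N\rtimes\gp(B)$ together with the surjective group homomorphism $q\colon G\to\gp(B)$, and the pullback $B\times_{\gp(B)}G$ of $q$ along $\eta_B$ is exactly $N\rtimes B\cong A$, with first projection $f$. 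Hence $f$ is a pullback of a group surjection, and Proposition~\ref{Proposition Trivial implies normal} yields that $f$ is a trivial extension.

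The main obstacle is the structural step invoked in the last paragraph: extracting from the two homogeneity conditions the precise semidirect-product description of $A$, namely that the monoid action underlying the Schreier decomposition is by automorphisms and that the associated factor set is trivial. This is where the hypotheses genuinely bite. The group completion of an arbitrary monoid is intractable, but the semidirect form $N\rtimes B$ with $N$ already a group makes it computable, since only the $B$-component must be group-completed. Everything else — the two fibre computations, the coset bijections, and the identification of the pullback — is routine.
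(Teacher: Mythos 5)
The paper itself contains no proof of this proposition: it is imported from~\cite{MRVdL} (Proposition~4.2 there) with the proof omitted, so your attempt has to be measured against the standard argument of that source and of~\cite{SchreierBook}, which it reconstructs faithfully. Your reformulation of (ii) is correct: homogeneity of $(\pi_1,\Delta)$, whose kernel is $\{1\}\times N\cong N$, says exactly that $n\mapsto na$ and $n\mapsto an$ are bijections $N\to f^{-1}(f(a))$ for all $a\in A$; taking $a\in N$ makes all translations of $N$ bijective, so every element of $N$ has a left and a right inverse and $N$ is a group, while $a=s(b)$ recovers homogeneity of $f$, and the converse follows from the Schreier decomposition. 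The direction (i)$\To$(ii) is also fine, and you could even bypass Proposition~\ref{Proposition Trivial implies normal} there: by definition a trivial extension is the pullback of $\gp(f)$ along $\eta_B$, after which your coset computations in the group apply verbatim. In the converse direction, note explicitly that your appeal to Proposition~\ref{Proposition Trivial implies normal} (``pullback of a fibration in $H(\X)$ implies trivial'') is precisely where admissibility of $\Gamma_{\mon}$ enters---this is Theorem~2.2 of~\cite{MRVdL}, recorded in the paper just before the statement, so it is legitimately available, but it is a genuine hypothesis, not a formality.

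The one step you grant---that a homogeneous split epimorphism is a semidirect product $N\rtimes B$ with $B$ acting by automorphisms and trivial factor set---is true and is exactly the Schreier-theoretic correspondence of~\cite{SchreierBook}, so it is a citable (or three-line) step rather than a genuine gap. Concretely: right homogeneity gives the unique decomposition $a=n\,sf(a)$; since $s(b)n\in f^{-1}(b)$, one may define $b\cdot n\in N$ by $s(b)\,n=(b\cdot n)\,s(b)$, and uniqueness makes $b\cdot(-)$ a monoid endomorphism of $N$ and $b\mapsto b\cdot(-)$ a monoid homomorphism $B\to \mathrm{End}(N)$, with multiplication $(n\,s(b))(n'\,s(b'))=\bigl(n\,(b\cdot n')\bigr)s(bb')$, the factor set being trivial automatically because $s$ is a homomorphism; left homogeneity then says precisely that each $b\cdot(-)$ is bijective, i.e.\ the action lands in $\mathrm{Aut}(N)$, so it extends along $\eta_B$ to $\gp(B)$ as you claim. (Compare the paper's own use of Proposition~6.7.2 of~\cite{SchreierBook} in the semirings example, which invokes exactly this description.) With that step made explicit or cited, your proof is complete and follows the same route as the cited one.
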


\begin{theorem}[\cite{MRVdL}, Theorem 4.4]
\label{mrvdl2:Theorem: normal extensions} For a surjective
homomorphism~$g $ of monoids, the following statements are
equivalent:
\begin{tfae}
\item $g $ is a central extension; \item $g $ is a normal
extension; \item $g $ is a special homogeneous surjection.\noproof
\end{tfae}
\end{theorem}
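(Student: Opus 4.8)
The plan is to run the cycle of implications (iii)~$\To$~(ii)~$\To$~(i)~$\To$~(iii); the step (ii)~$\To$~(i) is free, since every normal extension is automatically central. Throughout, the bridge between the abstract Galois-theoretic conditions and the concrete notion of Definition~\ref{mrvdl2:homogeneous split epi} is Proposition~\ref{mrvdl2:Proposition: trivial split extensions}, which says that a split epimorphism is a trivial extension precisely when it is a special homogeneous surjection. Writing $K=g^{-1}(1)$ for the kernel of a surjection $g\colon A\to B$, I would first record that homogeneity of the kernel pair projection $(\pi_1,\Delta)\colon \R{g}\to A$ unwinds to the single fibrewise condition that $k\mapsto ka$ and $k\mapsto ak$ be bijections $K\to g^{-1}(g(a))$ for each $a\in A$; this is the form of (iii) I would work with.

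For (iii)~$\To$~(ii) I would show that $\pi_1$ (and, symmetrically, $\pi_2$) is a trivial extension, which is exactly normality of $g$. Since $\pi_1$ is a split epimorphism, Proposition~\ref{mrvdl2:Proposition: trivial split extensions} reduces this to checking that $\pi_1$ is \emph{itself} a special homogeneous surjection, i.e.\ that the kernel pair projection of $\pi_1$ is homogeneous. The point, and the combinatorial crux of this step, is that for a kernel pair projection this is \emph{the same} condition as the homogeneity of $\pi_1$: computing $\R{\pi_1}\cong\{(x,y,y')\mid g(x)=g(y)=g(y')\}$, its kernel is again $K$, and its homogeneity unwinds to the identical family of bijections $k\mapsto ky$ and $k\mapsto yk$ onto $g^{-1}(g(y))$. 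Thus homogeneity of $\pi_1$ propagates one level up for free and $\pi_1$ is trivial.

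For (i)~$\To$~(iii), which carries the real structural content, I would start from a fibration $p$ along which $p^{*}(g)$ is trivial, and invoke Proposition~\ref{Proposition Trivial implies normal}: the trivial extension $p^{*}(g)$ is a pullback of a surjection $q$ of groups. The fibres of $q$ are cosets of $\ker q$, on which $\ker q$ acts simply transitively; pulling back preserves kernels, so that $K\cong\ker q$ is a \emph{group} and the fibres of $p^{*}(g)$ are $K$-torsors. Because $p$ is a regular epimorphism, every fibre $g^{-1}(b)$ is $K$-equivariantly isomorphic to a fibre of $p^{*}(g)$ (the $K$-action being, on both sides, multiplication inside the total object); transporting the torsor structure back yields exactly the bijections $k\mapsto ka$ and $k\mapsto ak$ of (iii). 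Together with the free implication (ii)~$\To$~(i) this closes the cycle.

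The step I expect to be the genuine obstacle is (i)~$\To$~(iii): turning the abstract statement ``$p^{*}(g)$ is trivial for some $p$'' into the fibrewise torsor description of $g$ requires care in checking that the isomorphisms between the fibres of $g$ and those of $p^{*}(g)$ are compatible with the respective multiplication actions, and that simple transitivity genuinely descends along the regular epimorphism $p$; in particular one must justify that $K$ becomes a group. By contrast, the coincidence used in (iii)~$\To$~(ii), although the heart of that step, is a direct calculation with the iterated kernel pair. Incidentally, (iii)~$\To$~(i) also admits a quick direct proof: pulling $g$ back along itself yields the kernel pair projection $\pi_1$, which is homogeneous by (iii), hence trivial by Proposition~\ref{mrvdl2:Proposition: trivial split extensions}, exhibiting $g$ as central.
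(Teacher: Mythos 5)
This theorem is imported from~\cite{MRVdL} (Theorem~4.4 there) and carries \texttt{\textbackslash noproof}: the present paper gives no argument for it, so your proposal can only be measured against the original proof, which it essentially reproduces. Your cycle (iii)~$\To$~(ii)~$\To$~(i)~$\To$~(iii) is correct: the fibrewise unwinding of special homogeneity (for each $a\in A$, $k\mapsto ka$ and $k\mapsto ak$ bijections $K\to g^{-1}(g(a))$) is right, the computation showing that this condition propagates to the iterated kernel pair---so that $\pi_1$, $\pi_2$ are special homogeneous split epimorphisms and hence trivial by Proposition~\ref{mrvdl2:Proposition: trivial split extensions}---is exactly the content of the coincidence, for split epimorphisms, between homogeneous points and special homogeneous surjections used in~\cite{MRVdL}, and your (i)~$\To$~(iii) is sound (note you only need the easy direction of Proposition~\ref{Proposition Trivial implies normal}, which holds by the very definition of trivial extension, so no hidden appeal to admissibility). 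The one genuine difference is that where the original argument cites two black-box lemmas from~\cite{SchreierBook} (pullback-stability, Proposition~7.1.4, and descent of special homogeneous surjections along surjections, Proposition~7.1.5), you reprove the descent step by hand via the $K$-torsor transport along the fibre bijections of the pullback; that computation is correct as written---the second projection identifies the fibre of $p^{*}(g)$ over $e$ with $g^{-1}(p(e))$ equivariantly for left and right multiplication by the kernel, and surjectivity of $p$ makes every fibre of $g$ occur---so you have made explicit precisely what the cited lemma encapsulates, at the cost of a purely element-theoretic presentation which the categorical formulation of~\cite{SchreierBook} avoids.
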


Special homogeneous split epimorphisms are, in particular,
Schreier split epimorphisms, hence strong points
(\cite{SchreierBook}, Lemma 2.1.6). Moreover, they are stable
under pullbacks (\cite{SchreierBook}, Proposition $7.1.4$). So,
$\Mon $ is an $\s $-protomodular category with respect to the
class $\s $ of special homogeneous split epimorphisms, which are
precisely the split epimorphic trivial extensions of the Galois
structure $\Gamma_{\mon}$ we are considering. All the other
conditions we assumed in Section~\ref{Section S-application} are
clearly satisfied by $\Gamma_{\mon}$. As a consequence of Theorem
\ref{Reflectiveness general monoids}, we see that special
homogeneous surjections are reflective amongst surjective monoid
homomorphisms. We observe that this fact was already proved in
\cite{MRVdL2}, using Theorem $4.2$ in~\cite{JK:Reflectiveness}
(although, as we already mentioned, the same theorem cannot be
applied to the general framework of Section~\ref{Section
S-application}).

\subsection{Semirings and rings}
The second example we consider is of a similar nature. Now $\C =
\SRng $ is the category of semirings, and $\X = \Rng $ is the
reflective subcategory of rings. In order to describe the
reflection, we first restrict the group completion functor to
commutative monoids. This restriction has a simpler description
which we now recall. If $(M, + , 0) $ is a commutative monoid,
then its group completion $\gp(M) $ can be described as the
quotient ${M \times M}/{\sim}$, where $(m, n) \sim (p, q) $ when
there exists $k \in m $ such that
\[
m + q + k = n + p + k.
\]
Now let $(M, +, \cdot, 0) $ be a semiring; we can define a product
in $\gp(M) $ in the following way:
\[
[(m, n)] \cdot [(m', n')] = [(m \cdot m' + n \cdot n', m \cdot n'
+ n \cdot m')].
\]
It is easy to check that this definition does not depend on the
choice of the representative for the class in $\gp(M) $, and that
it turns $\gp(M) $ into a ring. Hence it gives the desired
reflection $\rng \colon \SRng \to \Rng $.

Via a simplified version of the arguments used in~\cite{MRVdL} for
the Galois structure between $\Mon $ and $\Grp $, it is not
difficult to see that the reflection of the adjunction between
$\SRng $ and $\Rng $ is admissible with respect to the classes of
surjective homomorphisms both in $\SRng $ and in $\Rng $. Hence we
get an admissible Galois structure. Once again, the split
epimorphic trivial extensions are precisely the special
homogeneous split epimorphisms, while the normal (=~central)
extensions are the special homogeneous surjections; the proofs
easily follow from those of Proposition~\ref{mrvdl2:Proposition:
trivial split extensions} and Theorem~\ref{mrvdl2:Theorem: normal
extensions}. Proposition $6.7.2$ in~\cite{SchreierBook} implies
that a split epimorphism $(f\colon{A\to B},s\colon {B\to A}) $ in
$\SRng $ is special homogeneous if and only if the kernel $N$ of
$f$ is a ring and $A$ is isomorphic to a semidirect product of $B$
and $N$. (Observe that every Schreier split epimorphism of
semirings is homogeneous, because the additive monoid structure is
commutative.) This implies, in particular, that $A$, as a monoid,
is the cartesian product of $B$ and $N$.

It is easy to see that all the conditions of Section~\ref{Section
S-application} are satisfied by this Galois structure. Hence
Theorem~\ref{Reflectiveness general monoids} implies, like for the
case of monoids and groups, that special homogeneous surjections
of semirings are reflective amongst surjective homomorphisms.
(Once again, we could also conclude this by applying Theorem~4.2
in \cite{JK:Reflectiveness}.)

%We conclude this section by observing that this last example can
%be easily generalised to the case of a variety $\C $ (in the sense
%of universal algebra) defined by a binary operation $+$ and a
%unique constant $0$ satisfying the usual commutative monoid
%axioms, and by other binary operations that are distributive with
%respect to $+$. (\textbf{I don't know if it is better to say
%something more here})

%%%%%%%%%%%%%%%%%%%%%%%%%%%%%%%%%%%%%%%%%%%%%%%%%%%%%%%%%%%%%%%%%%%

\section{The additive core of a unital category} \label{Section additive core}

This section is devoted to the description of a general example of
the situation considered in Section~\ref{Section S-application}.
This example is of a rather different nature from the ones of the
previous section, so that Theorem $4.2$
of~\cite{JK:Reflectiveness} does not apply.

We start by recalling from~\cite{B0} that a pointed finitely
complete category $\C $ is \defn{unital} when, for every pair of
objects $(A, B) $ of $\C $, the morphisms $\langle 1_A, 0_{A,B}
\rangle $ and $\langle 0_{B,A}, 1_B \rangle $ in the product
diagram
\[
\xymatrix@C=40pt{ A \ar@<-.5ex>[r]_-{\langle 1_A, 0_{A,B} \rangle}
& A \times B \ar@<-.5ex>[l]_-{p_A} \ar@<.5ex>[r]^-{p_B} & B
\ar@<.5ex>[l]^-{\langle 0_{B,A}, 1_B \rangle} }
\]
are jointly strongly epimorphic.

Examples of unital categories are all J\'{o}nsson-Tarski varieties
(Example~\ref{first examples}). Actually, as shown
in~\cite[Theorem~1.2.15]{Borceux-Bourn}, a variety of universal
algebras is a unital category precisely when it is a
J\'{o}nsson-Tarski variety.

An object $X$ in a unital category $\C $ is called \defn{abelian}
when it carries an internal abelian group structure (which is
necessarily unique, as a consequence of Theorem~1.4.5
in~\cite{Borceux-Bourn}). The full subcategory of $\C $ determined
by the abelian objects is denoted $\Ab(\C) $ and called the
\defn{additive core} of $\C $. The category $\Ab(\C) $ is indeed
additive (by Corollary 1.10.13 in~\cite{Borceux-Bourn}), hence it
is protomodular (by Example~3.1.13 in~\cite{Borceux-Bourn}). If
$\C $ is a finitely cocomplete regular unital category, then
$\Ab(\C) $ is really a core, since it is a reflective subcategory
of $\C $ by Propositions 1.7.5 and 1.7.6 of~\cite{Borceux-Bourn}
$$
 \xymatrix{\C \ar@<1ex>[r]^-{\ab} \ar@{}[r]|-\bot & \Ab(\C); \ar@<1ex>[l]^-{\supset}}
$$
the unit is denoted by $\eta^{\ab}$. Since $\Ab(\C) $ is closed in
$\C $ under regular
epimorphisms~\cite[Proposition~1.6.11]{Borceux-Bourn}, this
adjunction gives a Galois structure with respect to the regular
epimorphisms in $\C $ and in $\Ab(\C) $; we denote it
by~$\Gamma_{\ab}$.

We now assume $\C $ to be a finitely cocomplete Barr-exact unital
category. We can then show that the Galois structure
$\Gamma_{\ab}$ satisfies all the conditions of
Section~\ref{Section S-application}. First of all, $\Ab(\C) $ is
also Barr-exact~\cite[Theorem~5.11]{Barr}. The additive core
$\Ab(\C) $ is then an abelian category, called the \defn{abelian
core} of~$\C $. Next, we shall prove that~$\C $ is an $\s
$-protomodular category, where $\s $ is the class of split
epimorphic trivial extensions. In fact, the split epimorphic
trivial extensions for the Galois structure $\Gamma_{\ab}$ have an
easy description: see Proposition~\ref{trivial split extensions
abelian core}.

\begin{lemma}\label{lemma products}
If $B$ is an object and $N$ an abelian object of $\C $ then
\[
\ab(N\times B)\cong N\times \ab(B).
\]
\end{lemma}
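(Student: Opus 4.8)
The plan is to verify directly that the evident comparison morphism exhibits $N\times\ab(B)$ as the abelianisation of $N\times B$, rather than trying to compute the reflector $\ab$ by hand. First I would record that $N\times\ab(B)$ is itself an abelian object: being a reflective subcategory of $\C$, the category $\Ab(\C)$ is closed under limits in $\C$, so the product of the two abelian objects $N$ and $\ab(B)$, formed in $\C$, again lies in $\Ab(\C)$. Throughout, write $m_X\colon X\times X\to X$ for the (unique) internal addition on an abelian object $X$, and recall that $0$ is a unit for $m_X$.

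The key unital-category fact I would isolate is the following. For any abelian object $X$ and any objects $U$, $V$, the assignment sending $\varphi\colon U\times V\to X$ to the pair of its restrictions along the two injections $\langle 1_U,0_{U,V}\rangle$ and $\langle 0_{V,U},1_V\rangle$ is a bijection onto pairs of morphisms $(U\to X,\ V\to X)$. Uniqueness of a morphism with prescribed restrictions is exactly the joint strong epimorphicity of the two injections, which is the unitality of $\C$ applied to the pair $(U,V)$; existence is witnessed by $m_X\comp(f\times g)$, which restricts to $f$ and to $g$ along the respective injections precisely because $0$ is a unit for $m_X$. In short, a morphism out of a binary product into an abelian object is the ``sum'' of its two restrictions.

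With this in hand I would take the comparison $u=1_N\times\eta^{\ab}_{B}\colon N\times B\to N\times\ab(B)$ and prove it is universal among morphisms from $N\times B$ to abelian objects. Given an abelian $X$ and $f\colon N\times B\to X$, set $f_N=f\comp\langle 1_N,0_{N,B}\rangle$ and $f_B=f\comp\langle 0_{B,N},1_B\rangle$, factor $f_B=\overline{f_B}\comp\eta^{\ab}_{B}$ through the unit, and put $\overline{f}=m_X\comp(f_N\times\overline{f_B})$. A short computation gives $u\comp\langle 1_N,0_{N,B}\rangle=\langle 1_N,0_{N,\ab(B)}\rangle$ and $u\comp\langle 0_{B,N},1_B\rangle=\langle 0_{\ab(B),N},1_{\ab(B)}\rangle\comp\eta^{\ab}_{B}$, from which $\overline{f}\comp u=f$ follows. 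For uniqueness, any $g$ with $g\comp u=f$ satisfies $g\comp\langle 1_N,0_{N,\ab(B)}\rangle=f_N$ and $g\comp\langle 0_{\ab(B),N},1_{\ab(B)}\rangle\comp\eta^{\ab}_{B}=f_B=\overline{f_B}\comp\eta^{\ab}_{B}$; the universal property of $\eta^{\ab}_{B}$ forces $g$ and $\overline{f}$ to agree after the second injection as well, and then the bijection noted above (joint strong epimorphicity of the injections into $N\times\ab(B)$) yields $g=\overline{f}$. Hence $u$ is the reflection unit and $\ab(N\times B)\cong N\times\ab(B)$.

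The one step to treat with genuine care is the isolated unital-category fact, above all the claim that $m_X\comp(f\times g)$ is the \emph{unique} cooperating morphism: this is where unitality is really used, both to produce the required factorisations and, through joint strong epimorphicity of the product injections into $N\times\ab(B)$, to secure uniqueness. Everything else is a naturality-free diagram chase with the product injections and the unit $\eta^{\ab}$, and no appeal to Barr-exactness or cocompleteness is needed beyond the existence of $\ab(B)$ itself.
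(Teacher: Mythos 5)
Your proof is correct, but it takes a different route from the paper's. The paper constructs an explicit pair of mutually inverse morphisms: the canonical comparison $\lambda\colon \ab(N\times B)\to N\times\ab(B)$ with $\lambda\comp\eta^{\ab}_{N\times B}=1_N\times\eta^{\ab}_B$, and a morphism $\xi\colon N\oplus\ab(B)\to\ab(N\times B)$ defined via the \emph{coproduct} structure of $N\oplus\ab(B)$ in the additive category $\Ab(\C)$ (where binary products are biproducts); it then verifies $\lambda\comp\xi=1$ and $\xi\comp\lambda=1$ using exactly the two tools you also rely on, namely the joint epimorphicity of the product injections (unitality) and the universal property of the unit $\eta^{\ab}$. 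You instead verify directly that $u=1_N\times\eta^{\ab}_B$ has the universal property of the reflection unit, manufacturing the factorisation $\overline{f}=m_X\comp(f_N\times\overline{f_B})$ by hand from the internal addition $m_X$ of the target; your isolated lemma --- that in a unital category every pair of morphisms into an abelian object admits a unique cooperator, so that morphisms out of a product into an abelian object are determined by, and freely prescribed on, their two restrictions --- is the elementary substitute for the paper's appeal to biproducts in $\Ab(\C)$ (Corollary 1.10.13 in Borceux--Bourn). The two formulas are cognate (the paper's $\xi$ is, unwound, the same ``sum of restrictions''), but your argument has two small advantages: it identifies the unit $\eta^{\ab}_{N\times B}$ with $1_N\times\eta^{\ab}_B$ \emph{en route}, which is exactly the refinement the paper needs afterwards in the proof of Proposition~\ref{trivial split extensions abelian core}, and it keeps all the input at the level of unitality plus the mere existence of $\ab(B)$, without invoking the additivity of $\Ab(\C)$ as a black box. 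Your preliminary steps are also sound: $N\times\ab(B)$ is abelian because $\Ab(\C)$ is a full (replete) reflective, hence limit-closed, subcategory --- or, more concretely, because the componentwise structure makes the product an internal abelian group --- and your existence and uniqueness computations check out.
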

\begin{proof}
There is a comparison morphism
\[
\lambda\colon \ab(N\times B)\to N\times \ab(B)
\]
such that $\lambda\comp \eta_{N\times B}^{\ab} = 1_N\times
\eta_B^{\ab}$. We use the fact that binary products coincide with
binary coproducts in $\Ab(\C) $ and consider the morphism
$$
 \xi=\left\lgroup \eta_{N\times B}^{\ab}\comp \langle 1_N,0_{N,B} \rangle \;\; \ab(\langle 0_{B,N},1_B \rangle) \right\rgroup\colon N\oplus\ab(B) \to \ab(N\times B).
$$
Note that for the coproduct inclusions $i_N $ and $i_{\ab(B)}$ of
$N\oplus \ab(B) $, we have $i_N=\langle 1_N,0_{N,\ab(B)}\rangle $
and $i_{\ab(B)}=\langle 0_{\ab(B),N},1_{\ab(B)}\rangle $. Then
\begin{align*}
 \lambda \comp \xi \comp i_N & =
 \lambda \comp \eta_{N\times B}^{\ab}\comp \langle 1_N,0_{N,B}\rangle
 = (1_N\times \eta_B^{\ab})\comp \langle 1_N,0_{N,B}\rangle
 = \langle 1_N,0_{N,\ab(B)}\rangle
 = i_N
\end{align*}
and
\begin{align*}
 \lambda \comp \xi \comp i_{\ab(B)} \comp \eta_B^{\ab} & =
 \lambda \comp \ab(\langle 0_{B,N},1_B\rangle ) \comp \eta_B^{\ab}
 = \lambda\comp \eta_{N\times B}^{\ab}\comp \langle 0_{B,N},1_B\rangle \\
 & = (1_N\times \eta_B^{\ab}) \comp \langle 0_{B,N},1_B\rangle
 = \langle 0_{B,N},\eta_B^{\ab}\rangle \\
 & = \langle 0_{\ab(B),N},1_{\ab(B)}\rangle \comp \eta_B^{\ab}
 = i_{\ab(B)}\comp \eta_B^{\ab}.
\end{align*}
The universal property of the unit $\eta^{\ab}$ gives $\lambda
\comp \xi \comp i_{\ab(B)} = i_{\ab(B)}$, so that $\lambda \comp
\xi = 1_{N\oplus \ab(B)}$.

On the other hand, the equalities
\begin{align*}
 \xi \comp (1_N\times \eta_B^{\ab})\comp \langle 1_N,0_{N,B}\rangle & =
 \xi \comp \langle 1_N,0_{N,\ab(B)}\rangle
 = \xi \comp i_N
 = \eta_{N\times B}^{\ab}\comp \langle 1_N,0_{N,B}\rangle
\end{align*}
and
\begin{align*}
 \xi \comp (1_N\times \eta_B^{\ab})\comp \langle 0_{B,N},1_B\rangle & =
 \xi \comp \langle 0_{B,N},\eta_B^{\ab}\rangle
 = \xi \comp \langle 0_{\ab(B),N},1_{\ab(B)}\rangle \comp \eta_B^{\ab} \\
 & = \xi \comp i_{\ab(B)}\comp \eta_B^{\ab}
 = \ab(\langle 0_{B,N},1_{\ab(B)}\rangle )\comp \eta_B^{\ab} \\
 & = \eta_{N\times B}^{\ab}\comp \langle 0_{B,N},1_B\rangle
\end{align*}
show that $\xi \comp (1_N\times \eta_B^{\ab})=\eta_{N\times
B}^{\ab}$ since $\langle 1_N,0_{N,B}\rangle $ and $\langle
0_{B,N},1_B\rangle $ are jointly epimorphic, $\C $ being a unital
category. Finally, from
\[
 \xi \comp \lambda\comp \eta_{N\times B}^{\ab} =
 \xi \comp (1_N\times \eta_B^{\ab}) = \eta_{N\times B}^{\ab}
\]
we conclude that $\xi \comp \lambda=1_{\ab(N\times B)}$ by the
universal property of the unit $\eta^{\ab}$.
\end{proof}

\begin{proposition} \label{trivial split extensions abelian core}
Let $\C $ be a finitely cocomplete Barr-exact unital category.
A~split epimorphism $f\colon{A\to B}$ with splitting $s
\colon{B\to A}$ in $\C $ is a trivial extension with respect to
$\Gamma_{\ab}$ if and only if the following two conditions hold:
\begin{enumerate}
\item $(f,s) $ is isomorphic, as a point, to a product
\[
(p_{B}\colon{N\times B\to B},\langle0_{B,N},1_{B}\rangle\colon
{B\to N\times B});
\]
\item the kernel $N$ of $f $ is abelian.
\end{enumerate}
\end{proposition}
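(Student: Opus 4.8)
The plan is to treat the two implications separately. For the ``if'' direction I would assume that $(f,s)$ is point-isomorphic to $(p_B\colon N\times B\to B,\langle 0_{B,N},1_B\rangle)$ with $N$ abelian. A split epimorphism is automatically a regular epimorphism, hence a fibration, so it remains only to check the pullback condition. For this I would invoke Lemma~\ref{lemma products}, which gives $\ab(N\times B)\cong N\times\ab(B)$; tracing the isomorphism $\lambda$ identifies the unit $\eta_{N\times B}^{\ab}$ with $1_N\times\eta_B^{\ab}$ and $\ab(p_B)$ with the projection $p_{\ab(B)}\colon N\times\ab(B)\to\ab(B)$. The naturality square for $p_B$ then becomes the pullback of $p_{\ab(B)}$ along $\eta_B^{\ab}$ (the $N$-factor contributing an identity pullback), so $p_B$ is a trivial extension.

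For the ``only if'' direction I would start from the hypothesis that the naturality square of $f$ is a pullback. Since pullbacks preserve kernels (the kernel being the pullback along $0\to B$, and $0\to B\to\ab(B)$ being the zero map), a pasting of pullbacks shows that $\eta_A^{\ab}$ restricts to an isomorphism $\Ker{f}\xrightarrow{\ \sim\ }\Ker{\ab(f)}$. As $\Ab(\C)$ is reflective in $\C$ it is closed under limits, so $\Ker{\ab(f)}$ is an abelian object; hence $N=\Ker{f}$ is abelian, which is exactly condition~(2).

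To obtain condition~(1), I would use that $\ab(f)$ is a split epimorphism in the additive category $\Ab(\C)$ with kernel isomorphic to $N$, so it decomposes as a biproduct: $\ab(A)\cong N\times\ab(B)$ with $\ab(f)$ corresponding to $p_{\ab(B)}$ and $\ab(s)$ to the injection $\langle 0,1\rangle$. Since $f$ is a trivial extension, its naturality square exhibits $A$ as the pullback $B\times_{\ab(B)}\ab(A)$; substituting $\ab(A)\cong N\times\ab(B)$ and pasting pullbacks (the projection $p_{\ab(B)}$ pulls back along $\eta_B^{\ab}$ to $p_B\colon N\times B\to B$) yields $A\cong N\times B$ over $B$, carrying $f$ to $p_B$. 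Tracking the section through the same identifications, using naturality $\eta_A^{\ab}\comp s=\ab(s)\comp\eta_B^{\ab}$ and that $\ab(s)$ is the biproduct injection, shows that $s$ corresponds to $\langle 0_{B,N},1_B\rangle$, giving the desired point isomorphism.

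I expect the main obstacle to be the bookkeeping in this last step: checking that the several identifications are mutually compatible as morphisms of \emph{points}, in particular that the isomorphism $A\cong N\times B$ really carries the section $s$ to $\langle 0_{B,N},1_B\rangle$ and not merely $f$ to $p_B$. An alternative that sidesteps part of this is to define a retraction $\rho=p_K\comp\eta_A^{\ab}\colon A\to N$ (with $p_K$ the biproduct projection of $\ab(A)$ onto $N$), verify $\rho\comp k=1_N$ and $\rho\comp s=0$ directly, and then prove that $\langle\rho,f\rangle\colon A\to N\times B$ is an isomorphism by showing that its image under $\ab$ restricts to the identity on kernels and on $\ab(B)$, invoking the Short Five Lemma in the abelian category $\Ab(\C)$ together with the full faithfulness of $H^B$ guaranteed by admissibility.
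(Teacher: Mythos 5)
Your proposal is correct and takes essentially the same route as the paper: the ``if'' direction is exactly the paper's appeal to Lemma~\ref{lemma products} to identify $\eta_{N\times B}^{\ab}$ with $1_N\times\eta_B^{\ab}$, and the ``only if'' direction likewise reads the abelianness of $N$ off the pullback square and uses that a split epimorphism in the additive category $\Ab(\C)$ is a product projection, which pulls back along $\eta_B^{\ab}$ to give the point isomorphism. The bookkeeping you flag is harmless: in an additive category the decomposition $\ab(A)\cong N\times\ab(B)$ induced by the splitting automatically carries $\ab(s)$ to the injection $\langle 0,1\rangle$ (the associated retraction $r$ satisfies $r\comp\ab(s)=0$), which is why the paper asserts the isomorphism of points without further comment.
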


\begin{proof}
Let $(f,s) $ be a split epimorphic trivial extension. Then the
square
\[
 \xymatrix@C=30pt{A \halfsplitpullback \ar[r]^-{\eta_A^{\ab}} \ar@<0.5ex>[d]^-f & \ab(A) \ar@<0.5ex>[d]^-{\ab(f)}\\
 B \ar@<0.5ex>[u] \ar[r]_-{\eta_B^{\ab}} & \ab(B) \ar@<0.5ex>[u]}
 \]
is a pullback. So the kernel $N$ of $f $ is also the kernel of
$\ab(f) $, and is therefore abelian. Moreover, a split epimorphism
in $\Ab(\C) $ is a product projection and, consequently, $(f,s) $
is isomorphic to $(p_{B},\langle0_{B,N},1_{B}\rangle) $.

Conversely, we must show that any product projection
$(p_{B},\langle0,1_{B}\rangle) $, where $N$ is abelian, is a
trivial extension. To do so it suffices to show that
\[
\ab(N\times B)\cong N\times \ab(B),
\]
so that $\eta_{N\times B}\cong 1_N\times \eta_b $. This is
precisely Lemma~\ref{lemma products}.
\end{proof}

Thanks to this characterisation, we have that $\C $ is $\s
$-protomodular with respect to the class of split epimorphic
trivial extensions. This follows easily from the fact that a
pointed finitely complete category $\C $ is unital if and only if
it is $\s $-protomodular with respect to the class $\s $ of points
of the form $(p_{B},\langle0_{B,N},1_{B}\rangle) $---an
observation which is due to Sandra Mantovani.

The last condition of Section~\ref{Section S-application} we must
show to hold concerns the admissibility of the Galois structure
$\Gamma_{\ab}$.

\begin{theorem}
\label{theorem admissibility} Let $\C $ be a finitely cocomplete
Barr-exact unital category. The Galois structure $\Gamma_{\ab}$ is
admissible.
\end{theorem}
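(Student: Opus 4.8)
The plan is to verify admissibility straight from its definition: for every object $B$ of $\C $ we must show that the functor $H^B\colon (\F\downarrow \ab(B))\to (\E\downarrow B)$ is full and faithful, which, $H^B$ being a right adjoint, is the same as asking that its counit be invertible. So fix a fibration $g\colon {X\to \ab(B)}$ in $\F $ (thus $X$ is abelian and $g $ is a regular epimorphism) and form the defining pullback
\[
\xymatrix{P \pullback \ar[r]^-{p_2} \ar[d]_-{p_1} & X \ar[d]^-{g}\\ B \ar[r]_-{\eta_B^{\ab}} & \ab(B),}
\]
so that $H^B(g)=p_1 $. Since $X$, $\ab(B) $ and $\ab(B)\cong \ab\ab(B) $ are abelian, hence fixed by the reflection, applying $\ab $ collapses the bottom edge to an identity and yields a comparison morphism $\phi =\ab(p_2)\colon {\ab(P)\to X}$ over $\ab(B) $, that is, $g\comp \phi =\ab(p_1) $ after the identification $\ab\ab(B)\cong \ab(B) $. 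Admissibility is then equivalent to $\phi $ being an isomorphism for every such $g $.

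The key is to exploit that $\Ab(\C) $ is abelian in order to present $g $ by a kernel pair. Write $N=\Ker{g}$, an abelian object as a subobject of $X$. In the abelian category $\Ab(\C) $ one has $\Eq(g)\cong X\times N$, the two projections being the first projection and the ``sum''. Pulling this back along $\eta_B^{\ab}$ gives $\Eq(p_1)\cong P\times N$, with the two kernel pair projections $d_1,d_2\colon {P\times N\rightrightarrows P}$ given by the first projection and by the action of $N$ on $P$ that adds $n\in N\subseteq X$ to the $X$-coordinate. As $\C $ is Barr-exact and $p_1 $, being a pullback of $g $, is a regular epimorphism, the diagram ${P\times N\rightrightarrows P\to B}$ is a coequaliser. (When $g $ is a split epimorphism this whole analysis reduces to Lemma~\ref{lemma products} and Proposition~\ref{trivial split extensions abelian core}; the point is to reduce the general regular-epi case to it through the kernel pair.)

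Now apply $\ab $ to this coequaliser. Being a left adjoint, $\ab $ preserves it; and since $N$ is abelian, Lemma~\ref{lemma products} gives $\ab(P\times N)\cong \ab(P)\times N$ with $\ab(d_1) $ again the first projection. Hence $\ab(p_1)\colon {\ab(P)\to \ab(B)}$ is the coequaliser of ${\ab(P)\times N\rightrightarrows \ab(P)}$, that is, the cokernel of $\ab(\kappa)\colon {N\to \ab(P)}$, where $\kappa\colon {N\hookrightarrow P}$ is the kernel of $p_1 $. This produces an exact sequence ${N\to \ab(P)\to \ab(B)\to 0}$ which maps, through $\phi $, to the short exact sequence ${0\to N\to X\xrightarrow{g}\ab(B)\to 0}$ defining $N=\Ker{g}$, inducing the identity on $N$ and on $\ab(B) $. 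A short diagram chase in the abelian category $\Ab(\C) $ (a one-sided short five lemma: injectivity of $\ker g $ makes $\phi $ monic, while surjectivity of $g $ together with that of the cokernel $\ab(p_1) $ makes $\phi $ epic) then shows that $\phi $ is at once a monomorphism and a regular epimorphism, hence an isomorphism.

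The main obstacle is conceptual rather than computational: because $\C $ is only assumed unital, not Mal'tsev, the classical theorem that a Birkhoff reflection of a Barr-exact Mal'tsev category is admissible is not available, and the product-preservation Lemma~\ref{lemma products} is exactly what must stand in for it. The delicate points are therefore the identifications $\Eq(p_1)\cong P\times N$ and $\ab(\kappa)=$ the connecting map of the resulting presentation, together with the verification that $\ab $ keeps the first kernel pair projection a product projection, so that the two rows genuinely assemble into a morphism of (right) exact sequences. It is here, too, that restricting $\F $ to regular epimorphisms is essential: effectiveness of $g $ is what yields the clean kernel pair $\Eq(g)\cong X\times N$ on which the argument rests.
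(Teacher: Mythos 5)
Your argument is correct, and it reaches the theorem by a genuinely different route. The paper first invokes Theorem~4.3 of~\cite{CHK} (with Definition~5.5.3 and Proposition~5.5.5 of~\cite{Borceux-Janelidze}) to reduce admissibility to the statement that $\ab$ preserves pullbacks of regular epimorphisms of $\Ab(\C)$; it then settles the split-epimorphism case directly from Lemma~\ref{lemma products}, and handles the general case by passing to kernel pairs, showing the comparison $\ab(\Eq(f))\to\Eq(\ab(f))$ is an isomorphism (a regular epimorphism by exactness of the Mal'tsev category $\Ab(\C)$, and a split monomorphism because the relevant square is a discrete fibration), and finally descending along the Barr--Kock theorem. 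You instead verify the definition head-on: admissibility amounts to invertibility of the counit of $I^B\dashv H^B$, i.e.\ to $\phi=\ab(p_2)\colon{\ab(P)\to X}$ being an isomorphism for the pullback $P$ of $g$ along $\eta_B^{\ab}$ --- equivalent to the paper's criterion, since an arbitrary $b\colon{Y\to B}$ factors as an $\Ab(\C)$-morphism after $\eta_Y^{\ab}$. You then replace the Barr--Kock/discrete-fibration machinery by a homological computation: additivity of $\Ab(\C)$ gives $\Eq(g)\cong X\times N$, whence $\Eq(p_1)\cong P\times N$; regularity of $\C$ presents $p_1$ as the coequaliser of its kernel pair; the reflector preserves this coequaliser, and Lemma~\ref{lemma products} identifies $\ab(P\times N)\cong\ab(P)\times N$ so that $\ab(p_1)$ becomes the cokernel of $\ab(\kappa)\colon{N\to \ab(P)}$; a four-lemma chase in the abelian category $\Ab(\C)$ then makes $\phi$ monic and epic. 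I checked the two identifications you flag as delicate, and they do hold: under the Lemma~\ref{lemma products} isomorphism one has $d_2\comp\langle 1_P,0\rangle=1_P$ and $d_2\comp\langle 0,1_N\rangle=\kappa$, so naturality of $\eta^{\ab}$ gives $\ab(d_2)$ the components $(1_{\ab(P)},\ab(\kappa))$, and the coequaliser of $(\ab(d_1),\ab(d_2))$ is indeed $\mathrm{coker}(\ab(d_1)-\ab(d_2))=\mathrm{coker}(\ab(\kappa))$; also $N=\Ker{g}$ is abelian because any morphism between abelian objects of a unital category is automatically a morphism of the internal group structures. Both proofs hinge on Lemma~\ref{lemma products} as the unital-specific input and on Barr-exactness (you need it to know $\Ab(\C)$ is abelian, via Theorem~5.11 of~\cite{Barr}); what your version buys is a self-contained, element-style argument avoiding Barr--Kock and the Birkhoff-flavoured comparison, at the price of the explicit bookkeeping of the cokernel presentation, while the paper's version is shorter given the cited general theory.
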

\begin{proof}
Combining Theorem 4.3 in~\cite{CHK} with both Definition 5.5.3 and
Proposition~5.5.5 in~\cite{Borceux-Janelidze}, we see that the
Galois structure $\Gamma_{\ab}$ is admissible if and only if every
pullback
\[
\xymatrix{X \pullback \ar@{>>}[d]_-{f} \ar[r]^-{a} & A \ar@{>>}[d]^-{g}\\
Y \ar[r]_-{b} & B}
\]
with $g $ a regular epimorphism in $\Ab(\C) $ is preserved by the
reflector $\ab $.

We first begin by supposing that $g $ is a split epimorphism,
hence a product projection. Then, being its pullback, so is the
split epimorphism $f $. Furthermore, the morphism $a $ in the
pullback is of the form $1_{N\times B}\colon{N\times Y\to N\times
B}$ with~$N $ abelian, and it follows from Lemma~\ref{lemma
products} that $\ab $ preserves such a pullback.

For the general case, we consider the diagram
$$
\xymatrix@=30pt{
 \Eq(f) \ar@<1ex>[d] \ar@<-1ex>[d] \ar[r]^-{\eta_{\Eq(f)}^{\ab}} & \ab(\Eq(f)) \ar@<1ex>[d] \ar@<-1ex>[d] \ar[r] & \Eq(g) \ar@<1ex>[d] \ar@<-1ex>[d]\\
 X \ar[u] \ar[r]^-{\eta_X^{\ab}} \ar@{>>}[d]_-f & \ab(X) \ar@{>>}[d]^-{\ab(f)} \ar[u] \ar[r] & A \ar[u] \ar@{>>}[d]^-{g} \\
 Y \ar[r]_-{\eta_Y^{\ab}} & \ab(Y) \ar[r] & B.}
$$
The top rectangle fits into the previous case, so we can conclude
that both top squares are pullbacks. As mentioned in
Section~\ref{Section S-application}, the comparison morphism
$\ab(\Eq(f)) \to \Eq(\ab(f)) $ is a regular epimorphism. Since the
top right square above is a discrete fibration, this comparison
morphism is also a (split) monomorphism, thus an isomorphism. By
applying a well-known result for regular categories---called the
``Barr-Kock Theorem'' in~\cite{Bourn-Gran-CategoricalFoundations};
see Theorem 2.17 there, or 6.10 in~\cite{Barr}---to the right hand
side diagram, we conclude that the bottom right square is a
pullback.
\end{proof}

We may conclude that all the conditions of Section~\ref{Section
S-application} are satisfied. Hence Theorem~\ref{Reflectiveness
general monoids} gives the following

\begin{theorem}
Let $\C $ be a finitely cocomplete Barr-exact unital category, and
$\Ab(\C) $ its abelian core. Then normal extensions with respect to the induced Galois structure $\Gamma_{\ab}$ are reflective
amongst regular epimorphisms.\noproof
\end{theorem}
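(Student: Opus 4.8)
The plan is to verify that the Galois structure $\Gamma_{\ab}$ satisfies all four standing hypotheses of Section~\ref{Section S-application} and then to invoke Theorem~\ref{Reflectiveness general monoids} directly. Recall that for a Galois structure of that kind the normal extensions coincide with the $\s$-special regular epimorphisms (where $\s$ is the class of split epimorphic trivial extensions), so once the four conditions are in place the conclusion is immediate. Thus the entire task reduces to checking those conditions for the particular structure $\Gamma_{\ab}$ built from $\C$ and $\Ab(\C)$.

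First I would confirm that $\Gamma_{\ab}$ has the shape required by Definition~\ref{Def: Galois structure basic}: the adjunction $\ab\dashv{\supset}$ runs between $\C$ and $\Ab(\C)$, the classes $\E$ and $\F$ are the regular epimorphisms, and $\C$ has coequalisers of reflexive graphs since it is finitely cocomplete. Condition~(1)---that $\X=\Ab(\C)$ be Barr-exact---is \cite[Theorem~5.11]{Barr}, which moreover makes $\Ab(\C)$ an abelian category. Condition~(2)---that the inclusion $\Ab(\C)\hookrightarrow\C$ preserve regular epimorphisms, so that $\Gamma_{\ab}$ is genuinely a Galois structure---follows from the closure of $\Ab(\C)$ in $\C$ under regular epimorphisms \cite[Proposition~1.6.11]{Borceux-Bourn}. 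Condition~(3), the admissibility of $\Gamma_{\ab}$, is precisely Theorem~\ref{theorem admissibility}.

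For condition~(4) I would argue that $\C$ is $\s$-protomodular with respect to the class $\s$ of split epimorphic trivial extensions. Proposition~\ref{trivial split extensions abelian core} identifies these extensions, up to isomorphism of points, with the product projections $(p_{B},\langle0_{B,N},1_{B}\rangle)$ whose kernel $N$ is abelian. Combining this with the observation (due to Sandra Mantovani, recalled in the text) that a pointed finitely complete category is unital exactly when it is $\s$-protomodular relative to this class of points, the $\s$-protomodularity of the unital category $\C$ follows at once.

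With all four conditions verified, Theorem~\ref{Reflectiveness general monoids} applies to $\Gamma_{\ab}$ and shows that the $\s$-special regular epimorphisms---equivalently, the normal extensions with respect to $\Gamma_{\ab}$---are reflective amongst regular epimorphisms, which is the desired statement. The only genuinely substantial input is the admissibility of $\Gamma_{\ab}$: everything else is an assembly of facts already recalled in the section, so I expect Theorem~\ref{theorem admissibility}, and behind it the product computation of Lemma~\ref{lemma products}, to carry the real weight, while the present theorem is in essence a corollary of the general machinery of Section~\ref{Section S-application}.
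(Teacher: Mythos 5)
Your proposal is correct and follows essentially the same route as the paper: the text preceding the theorem likewise proves it by verifying the four standing hypotheses of Section~\ref{Section S-application} for $\Gamma_{\ab}$ --- Barr-exactness of $\Ab(\C)$ via \cite[Theorem~5.11]{Barr}, preservation of regular epimorphisms by the inclusion via \cite[Proposition~1.6.11]{Borceux-Bourn}, admissibility via Theorem~\ref{theorem admissibility} (resting on Lemma~\ref{lemma products}), and $\s$-protomodularity via Proposition~\ref{trivial split extensions abelian core} combined with Mantovani's observation --- and then invoking Theorem~\ref{Reflectiveness general monoids}. Your assessment that admissibility is the only substantial input, the rest being assembly of previously established facts, matches the structure of the paper's argument.
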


\subsection{Monoids versus abelian groups}
We describe the normal extensions with respect to $\Gamma_{\ab}$
in the particular case when $\C $ is the category of monoids, so
that $\Ab(\C) $ is the category of abelian groups. Our description
is similar to that of Theorem~\ref{mrvdl2:Theorem: normal extensions} concerning the Galois structure $\Gamma_{\mon}$
of Section~\ref{Examples}. However, now we must add a
commutativity condition. So, we need to recall the following.

\begin{definition}[\cite{H}]
Two subobjects $x \colon X \to Z$ and $y \colon Y \to Z$ of $Z$ in
a finitely complete unital category $\C $ are said to
\defn{commute} if there exists a (necessarily unique) morphism
$\varphi \colon X \times Y \to Z$, called the \defn{cooperator} of
$x $ and $y $, such that both triangles in the diagram
\[ \xymatrix{ X \ar[r]^(.4){\langle 1_X, 0 \rangle} \ar@{{ >}->}[dr]_x & X \times
Y \ar@{.>}[d]^{\varphi} & Y \ar[l]_(.4){\langle 0, 1_Y \rangle} \ar@{{ >}->}[dl]^y \\
& Z & } \] are commutative.
\end{definition}
When two subobjects $X$ and $Y$ of $Z$ commute we write $[X,
Y]=0$. In the category of monoids, two submonoids commute if and
only if every element of the first commutes, in the usual sense,
with every element of the second.

\begin{proposition}\label{6.6}
A surjective homomorphism of monoids $f \colon A \to B$, with
kernel $k \colon N \to A$, is a normal extension with respect to
the Galois structure $\Gamma_{\ab}$ if and only if it is a special
homogeneous surjection and $[N, A]= 0$.
\end{proposition}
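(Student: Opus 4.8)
The plan is to compare the two Galois structures $\Gamma_{\ab}$ and $\Gamma_{\mon}$ on $\Mon$, exploiting that the trivial extensions of both are explicitly known. By Proposition~\ref{trivial split extensions abelian core}, a split epimorphism is a $\Gamma_{\ab}$-trivial extension precisely when it is isomorphic, as a point, to a product projection $p_A\colon N\times A\to A$ with $N$ abelian; whereas by Proposition~\ref{mrvdl2:Proposition: trivial split extensions} the $\Gamma_{\mon}$-trivial extensions among split epimorphisms are exactly the homogeneous split epimorphisms (Definition~\ref{mrvdl2:homogeneous split epi}). Since such a product projection is visibly homogeneous---on each fibre $N\times\{a\}$, multiplication by the chosen section restricts to a bijection from $N$---every $\Gamma_{\ab}$-trivial extension is in particular a $\Gamma_{\mon}$-trivial extension. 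Applying this to the two kernel pair projections of a surjection $f$, I would first conclude that every $\Gamma_{\ab}$-normal extension is $\Gamma_{\mon}$-normal, hence a special homogeneous surjection by Theorem~\ref{mrvdl2:Theorem: normal extensions}. This reduces the statement to isolating the extra content carried by the word ``abelian'' and by the \emph{direct} (rather than merely semidirect) product structure.

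The heart of the argument is then the identification of this extra content with the centrality condition $[N,A]=0$. I would work inside $\Eq(f)\subseteq A\times A$, with first projection $\pi_1$ and diagonal section $\Delta$, whose kernel is the copy $\{(1_A,n)\mid n\in N\}$ of $N$. By Proposition~\ref{trivial split extensions abelian core}, $\pi_1$ is $\Gamma_{\ab}$-trivial if and only if $N$ is abelian and $(\pi_1,\Delta)$ is isomorphic to the product point $p_A\colon N\times A\to A$. In any product the two canonical subobjects commute, so such an isomorphism forces the subobject $\{(1_A,n)\}$ and the section image $\{(a,a)\mid a\in A\}$ to commute in $\Eq(f)$; comparing $(1_A,n)(a,a)=(a,na)$ with $(a,a)(1_A,n)=(a,an)$ yields $na=an$ for all $n\in N$ and $a\in A$, that is, $[N,A]=0$ (which in particular recovers the abelianness of $N$). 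Conversely, assuming $f$ special homogeneous (so that $N$ is a group) and $[N,A]=0$, the cooperator $\varphi\colon N\times A\to\Eq(f)$, $(n,a)\mapsto(a,na)$, is a homomorphism precisely because the two subobjects commute, and homogeneity makes it bijective with inverse $(a,a')\mapsto(\kappa,a)$, where $\kappa\in N$ is the unique element with $a'=\kappa a$. Thus $(\pi_1,\Delta)$ is a product point with abelian kernel, so $\pi_1$ is $\Gamma_{\ab}$-trivial, and symmetrically for $\pi_2$.

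Assembling the two directions gives the equivalence: $f$ is a $\Gamma_{\ab}$-normal extension if and only if both $\pi_1$ and $\pi_2$ are $\Gamma_{\ab}$-trivial, which by the above happens exactly when $f$ is a special homogeneous surjection and $[N,A]=0$. The step I expect to be the main obstacle is the careful verification that commuting of the kernel and the section inside $\Eq(f)$ is genuinely equivalent to centrality of $N$ in the whole of $A$: the forward reading is immediate because the section image $\{(a,a)\}$ sweeps out all of $A$ through $\pi_1$, but the backward reading---that $[N,A]=0$ suffices to turn the semidirect-product structure coming from homogeneity into an honest direct product---requires using the uniqueness in the homogeneity decomposition together with the abelianness of $N$ to check that $\varphi$ is both multiplicative and bijective. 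Once this equivalence of ``trivial action'' with $[N,A]=0$ is in hand, the remaining bookkeeping is routine.
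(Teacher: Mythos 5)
Your proposal is correct and follows essentially the same route as the paper's own proof: both reduce normality of $f$ to triviality of the kernel-pair point $(\pi_1,\Delta)$, invoke Proposition~\ref{trivial split extensions abelian core} to identify this with being a product point with abelian kernel, extract $[N,A]=0$ from the commutation of the two canonical subobjects of the product (the paper packages this as the cooperator $\pi_2\comp\alpha$, you unpack it elementwise---the same computation), and in the converse direction construct exactly the same isomorphism $(n,a)\mapsto (a,na)$ with inverse given by the unique Schreier decomposition $q(a_1,a_2)$ coming from right homogeneity of $(\pi_1,\Delta)$. The only divergence is cosmetic---you obtain special homogeneity of $f$ by comparing $\Gamma_{\ab}$ with $\Gamma_{\mon}$ and citing Theorem~\ref{mrvdl2:Theorem: normal extensions}, where the paper applies Proposition~7.1.5 of the Schreier book directly to $\pi_1$---together with one small imprecision worth noting: Proposition~\ref{mrvdl2:Proposition: trivial split extensions} characterises $\Gamma_{\mon}$-trivial split epimorphisms as \emph{special} homogeneous surjections, so your fibrewise-bijection argument should be applied to the kernel-pair projection of the product point $p_A\colon N\times A\to A$ rather than to $p_A$ itself; since $N$ is a group this is an equally immediate verification, so no genuine gap results.
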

\begin{proof}
By definition, $f $ is a normal extension if and only if the split
epimorphism $(\pi_{1}\colon \Eq(f)\to A, \Delta\colon A\to \Eq(f))
$ is a trivial extension. By Proposition~\ref{trivial split
extensions abelian core}, this happens if and only if $N$ is an
abelian group and there exist isomorphisms $\alpha $ and~$\beta $
of split extensions as in the diagram
\begin{equation*} \label{diagram normal extensions}
\xymatrix{ N \ar@{=}[d] \ar[r]^-{\langle 1_{N}, 0 \rangle} & N
\times A \ar@<-.5ex>[r]_-{p_A} \ar@<-.5ex>[d]_{\alpha} & A
\ar@{=}[d]
\ar@<-.5ex>[l]_-{\langle 0, 1_{A} \rangle} \\
N \ar[r]_-{\langle 0, k \rangle} & \Eq(f) \ar@<-.5ex>[r]_-{\pi_1}
\ar@<-.5ex>[u]_{\beta} & A. \ar@<-.5ex>[l]_-{\Delta} }
\end{equation*}
Via Proposition~\ref{trivial split extensions abelian core}, it is
easily seen that any split epimorphic trivial extension is a
special homogeneous surjection. Then, if the surjection $f $ is a
normal extension, its kernel pair projection $\pi_1$ is a special
homogeneous surjection, and hence $f $ also is, thanks to
Proposition 7.1.5 in~\cite{SchreierBook}. Moreover, $[N, A]=0$.
Indeed, the cooperator $\varphi \colon N \times A \to A$ is given
by $\varphi = \pi_2 \comp \alpha $. Let us check that it is
actually a cooperator:
\[
\varphi \comp \langle 1_{N}, 0 \rangle = \pi_2 \comp \alpha \comp
\langle 1_{N}, 0 \rangle = \pi_2 \comp \langle 0, k \rangle = k,
\]
and
\[
\varphi \comp \langle 0, 1_{A} \rangle = \pi_2 \comp \alpha \comp
\langle 0, 1_{A} \rangle = \pi_2 \comp \Delta = 1_A.
\]

Conversely, suppose that $f $ is special homogeneous and $[N, A] =
0$. The fact that $[N, A]= 0$ defines a morphism $\alpha \colon N
\times A \to \Eq(f) $ given by $\alpha(x,a)= (a, xa) $. Let us now
describe its inverse. Since $f$ is special homogeneous, the point
$(\pi_{1}\colon {\Eq(f)\to A}, {\Delta\colon A\to \Eq(f)}) $ is a
special homogeneous split epimorphism. Using right homogeneity, we
have that for every $(a_1, a_2) \in \Eq(f) $ there exists a unique
element $q(a_1, a_2) \in N $ such that
\[
(a_1, a_2) = (1, q(a_1, a_2)) (a_1, a_1) = (a_1, q(a_1, a_2) a_1).
\]
We define a map $\beta \colon \Eq(f) \to N \times A$ by putting
$\beta(a_1, a_2) = (q(a_1, a_2), a_1) $. It is indeed the inverse
of $\alpha $, because
\[ \alpha \comp \beta(a_1, a_2) = \alpha(q(a_1, a_2), a_1) = (a_1, q(a_1, a_2)
a_1) = (a_1, a_2) \] and
\[ \beta \comp \alpha(x, a) = \beta(a, xa) = (q(a, xa), a) = (q(\langle 0, k \rangle(x)
\Delta(a)), a) = (x, a), \] where the last equality follows from
Proposition 2.1.4 in~\cite{SchreierBook}. Then $\alpha $ is an
isomorphism. It clearly is a morphism of split extensions, and
this concludes the proof.
\end{proof}

We end with a proof that, also in the case of monoids and abelian
groups, normal and central extensions coincide.

\begin{proposition}
A surjective monoid homomorphism is a normal
extension if and only if it is a central extension.
\end{proposition}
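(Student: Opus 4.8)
The plan is to prove only the nontrivial implication, since it is already noted after Definition~\ref{trivial, central, normal} that every normal extension is central. For the converse I would rely entirely on the explicit description obtained in Proposition~\ref{6.6}: a surjective homomorphism $f\colon A\to B$ with kernel $N$ is normal for $\Gamma_{\ab}$ precisely when $f$ is a special homogeneous surjection and $[N,A]=0$. So, assuming $f$ central, it suffices to establish these two conditions. By definition of centrality there is a surjection $p\colon E\to B$ such that the pullback $p^{*}(f)\colon A\times_{B}E\to E$ is a trivial extension for $\Gamma_{\ab}$; the whole argument then consists in transporting information from $p^{*}(f)$ back to $f$ along the canonical projection $\pi\colon A\times_{B}E\to A$, which is a surjection as a pullback of~$p$.

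First I would check that $f$ is a special homogeneous surjection. By Proposition~\ref{Proposition Trivial implies normal} the trivial extension $p^{*}(f)$ is a pullback of a regular epimorphism in $\Ab$, that is, of a surjective homomorphism of abelian groups. Every surjective homomorphism of groups is a special homogeneous surjection (left and right translations by a group element are bijective), and such morphisms are pullback-stable (Proposition~$7.1.4$ in~\cite{SchreierBook}); hence $p^{*}(f)$ is itself a special homogeneous surjection. By Theorem~\ref{mrvdl2:Theorem: normal extensions} this is the same as saying that $p^{*}(f)$ is a central extension for $\Gamma_{\mon}$, so some further pullback $q^{*}(p^{*}(f))$ is $\Gamma_{\mon}$-trivial; since $q^{*}(p^{*}(f))=(p\comp q)^{*}(f)$ by the pasting law for pullbacks and $p\comp q$ is again a surjection, $f$ is central, hence a special homogeneous surjection.

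It remains to prove $[N,A]=0$, which I expect to be the main obstacle. The kernel of $p^{*}(f)$ is again $N$, and triviality of $p^{*}(f)$ exhibits $A\times_{B}E$ as the pullback $\ab(A\times_{B}E)\times_{\ab(E)}E$, whose first factor is an abelian group; therefore $N$ sits inside $A\times_{B}E$ as a central subobject, so that $[N,A\times_{B}E]=0$. Now $\pi$ restricts to the identity on $N$ and is a surjection, and I would descend the commutativity along it: given $n\in N$ and $a\in A$, a preimage $x$ of $a$ under $\pi$ satisfies $n\cdot x=x\cdot n$ in $A\times_{B}E$ by centrality, and applying the homomorphism $\pi$ gives $n\cdot a=a\cdot n$ in $A$. (Categorically, the cooperator $N\times(A\times_{B}E)\to A\times_{B}E$ factors through the regular epimorphism $1_{N}\times\pi$ to yield a cooperator $N\times A\to A$.) This gives $[N,A]=0$, and Proposition~\ref{6.6} then identifies $f$ as a normal extension. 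The delicate point throughout is exactly this last descent: centrality is established comfortably upstairs in $A\times_{B}E$, where the abelian-group structure is visible, and the real work lies in checking that it is not lost when pushing down along $\pi$.
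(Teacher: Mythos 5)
Your proof is correct and takes essentially the same route as the paper's: both reduce the claim to the characterisation in Proposition~\ref{6.6} and descend the two conditions---special homogeneity and $[N,A]=0$---from the trivial extension $p^{*}(f)$ along the surjective projection, with the commutativity descent carried out by exactly the paper's elementwise argument (a preimage of $a$ commutes with $N$ upstairs, and applying the surjective projection gives $na=an$). The only divergences are in two intermediate justifications, both valid: where you establish that $f$ is special homogeneous via the pasting-of-pullbacks detour through $\Gamma_{\mon}$-centrality and Theorem~\ref{mrvdl2:Theorem: normal extensions}, the paper simply cites Proposition~7.1.5 of~\cite{SchreierBook} to descend special homogeneity of $p^{*}(f)$ (itself obtained from Proposition~\ref{6.6}, since trivial implies normal) along the surjection; and where you read off $[N,\,A\times_{B}E]=0$ directly from the pullback presentation $A\times_{B}E\cong \ab(A\times_{B}E)\times_{\ab(E)}E$ with abelian first factor, the paper again gets it from Proposition~\ref{6.6} applied upstairs.
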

\begin{proof}
Since every normal extension is central, we only have to prove
that central extensions are normal. Let $f \colon A \to B$ be a central
extension. Then there exists a surjective morphism $p \colon E \to
B$ such that the morphism $\overline{f}$ in the pullback diagram
\[
\xymatrix{ N \ar@{=}[r] \ar[d]_{\langle 0, k \rangle} & N
\ar[d]^k \\
P \pullback \ar@{->>}[r]^{\overline{p}}
\ar@{->>}[d]_{\overline{f}} & A \ar@{->>}[d]^f
\\
E \ar@{->>}[r]_p & B }
\]
is a trivial extension. Being a trivial (and hence normal)
extension, $\overline{f}$ is a special homogeneous surjection, and
so $f $ is, thanks to Proposition 7.1.5 in~\cite{SchreierBook}.
Moreover, $[N, P] = 0$. Hence, for all $x \in N$ and all $(e,a)
\in P$, we have
\[
(1, x) (e, a) = (e, a) (1, x).
\]
Since $\overline{p}$ is surjective, this implies that $ax = xa $
for all $x \in N$ and all $a \in A$, and hence $[N, A] = 0$. This
proves that $f $ is a normal extension by Proposition~\ref{6.6}.
\end{proof}

\section*{Acknowledgements}
We are grateful to Mathieu Duckerts-Antoine, Tomas Everaert and
Sandra Man\-tovani for helpful discussions. Special thanks to the
referee for pointing out some mistakes in the first version of the
text.

%\bibliography{tim}

\begin{thebibliography}{10}

\bibitem{Barr}
M.~Barr, \emph{Exact categories}, Exact categories and categories of sheaves,
  Lecture Notes in Math., vol. 236, Springer, 1971, pp.~1--120.

\bibitem{Borceux-Bourn}
F.~Borceux and D.~Bourn, \emph{Mal'cev, protomodular, homological and
  semi-abelian categories}, Math. Appl., vol. 566, Kluwer Acad. Publ., 2004.

\bibitem{Borceux-Janelidze}
F.~Borceux and G.~Janelidze, \emph{Galois theories}, Cambridge Stud. Adv.
  Math., vol.~72, Cambridge Univ. Press, 2001.

\bibitem{Bourn1991}
D.~Bourn, \emph{Normalization equivalence, kernel equivalence and affine
  categories}, Category {T}heory, {P}roceedings {C}omo 1990 (A.~Carboni, M.~C.
  Pedicchio, and G.~Rosolini, eds.), Lecture Notes in Math., vol. 1488,
  Springer, 1991, pp.~43--62.

\bibitem{B0}
D.~Bourn, \emph{Mal'cev categories and fibration of pointed objects}, Appl.
  Categ. Structures \textbf{4} (1996), 307--327.

\bibitem{Bourn2003}
D.~Bourn, \emph{The denormalized {$3\times 3$} lemma}, J.~Pure Appl. Algebra
  \textbf{177} (2003), 113--129.

\bibitem{Bourn-monad}
D.~Bourn, \emph{On the monad of internal groupoids}, Theory Appl. Categ.
  \textbf{28} (2013), 150--165.

\bibitem{Bourn2014}
D.~Bourn, \emph{Mal'tsev reflection, {S}-{M}al'tsev and {S}-protomodular
  categories}, LMPA preprint 497, 2014.

\bibitem{Bourn-quandles}
D.~Bourn, \emph{A structural aspect of the category of quandles}, J. Knot
  Theory Ramifications \textbf{24} (2015).

\bibitem{Bourn-Gran-CategoricalFoundations}
D.~Bourn and M.~Gran, \emph{Regular, protomodular, and abelian categories},
  {C}ategorical Foundations: Special Topics in Order, Topology, Algebra and
  Sheaf Theory (M.~C. Pedicchio and W.~Tholen, eds.), Encyclopedia of Math.
  Appl., vol.~97, Cambridge Univ. Press, 2004, pp.~165--211.

\bibitem{SchreierBook}
D.~Bourn, N.~Martins-Ferreira, A.~Montoli, and M.~Sobral, \emph{Schreier split
  epimorphisms in monoids and in semirings}, Textos de Matem{\'a}tica
  (S{\'e}rie~B), vol.~45, Departamento de Matem{\'a}tica da Universidade de
  Coimbra, 2013.

\bibitem{S-proto}
D.~Bourn, N.~Martins-Ferreira, A.~Montoli, and M.~Sobral, \emph{Monoids and pointed {$S$}-protomodular categories}, Homology,
  Homotopy Appl., accepted for publication, 2015.

\bibitem{Bourn-Rodelo2}
D.~Bourn and D.~Rodelo, \emph{Comprehensive factorization and {I}-central
  extensions}, J.~Pure Appl. Algebra \textbf{216} (2012), 598--617.

\bibitem{Carboni-Kelly-Pedicchio}
A.~Carboni, G.~M. Kelly, and M.~C. Pedicchio, \emph{Some remarks on {M}altsev
  and {G}oursat categories}, Appl. Categ. Structures \textbf{1} (1993),
  385--421.

\bibitem{CHK}
C.~Cassidy, M.~H\'ebert, and G.~M. Kelly, \emph{Reflective subcategories,
  localizations and factorizations systems}, J. Austral. Math. Soc. \textbf{38}
  (1985), 287--329.

\bibitem{ED-NormalExt}
M.~Duckerts-Antoine and T.~Everaert, \emph{A classification theorem for normal
  extensions}, Pr{\'e}-Publica{\c c}{\~o}es DMUC \textbf{15--11} (2015), 1--19.

\bibitem{EverMaltsev}
T.~Everaert, \emph{Higher central extensions in {M}al'tsev categories}, Appl.\
  Categ.\ Structures \textbf{22} (2014), no.~5--6, 961--979.

\bibitem{Gran:Internal}
M.~Gran, \emph{Internal categories in {M}al'cev categories}, J.~Pure Appl.
  Algebra \textbf{143} (1999), 221--229.

\bibitem{H}
S.~A. Huq, \emph{Commutator, nilpotency and solvability in categories}, Q. J.
  Math. \textbf{19} (1968), no.~2, 363--389.

\bibitem{Im-Kelly}
G.~B. Im and G.~M. Kelly, \emph{On classes of morphisms closed under limits},
  J. Korean Math. Soc. \textbf{23} (1986), 19--33.

\bibitem{Janelidze:Pure}
G.~Janelidze, \emph{Pure {G}alois theory in categories}, J.~Algebra
  \textbf{132} (1990), no.~2, 270--286.

\bibitem{Janelidze:Recent}
G.~Janelidze, \emph{Categorical {G}alois theory: revision and some recent
  developments}, Galois connections and applications, Math. Appl., vol. 565,
  Kluwer Acad. Publ., 2004, pp.~139--171.

\bibitem{Janelidze-Kelly}
G.~Janelidze and G.~M. Kelly, \emph{Galois theory and a general notion of
  central extension}, J.~Pure Appl. Algebra \textbf{97} (1994), no.~2,
  135--161.

\bibitem{JK:Reflectiveness}
G.~Janelidze and G.~M. Kelly, \emph{The reflectiveness of covering morphisms in algebra and
  geometry}, Theory Appl.~Categ. \textbf{3} (1997), no.~6, 132--159.

\bibitem{Maltsev:RingCompletion}
A.~I. Mal'cev, \emph{On the immersion of an algebraic ring into a field}, Math.
  Ann. \textbf{113} (1937), 686--691.

\bibitem{Maltsev:GroupCompletion}
A.~I. Mal'cev, \emph{On the immersion of associative systems into groups, {I}}, Mat.
  Sbornik N. S. \textbf{6} (1939), 331--336.

\bibitem{Maltsev:GroupCompletionII}
A.~I. Mal'cev, \emph{On the immersion of associative systems into groups, {II}}, Mat.
  Sbornik N. S. \textbf{8} (1940), 241--264.

\bibitem{MartinsMontoliSH}
N.~Martins-Ferreira and A.~Montoli, \emph{On the ``{S}mith is {H}uq'' condition
  in {$S$}-protomodular categories}, Appl. Categ. Structures, in press, DOI
  10.1007/s10485-015-9411-1, 2015.

\bibitem{BM-FMS}
N.~Martins-Ferreira, A.~Montoli, and M.~Sobral, \emph{Semidirect products and
  crossed modules in monoids with operations}, J.~Pure Appl. Algebra
  \textbf{217} (2013), 334--347.

\bibitem{MartinsMontoliSobral2}
N.~Martins-Ferreira, A.~Montoli, and M.~Sobral, \emph{Semidirect products and split short five lemma in normal
  categories}, Appl. Categ. Structures \textbf{22} (2014), 687--697.

\bibitem{MRVdL}
A.~Montoli, D.~Rodelo, and T.~Van~der Linden, \emph{A {G}alois theory for
  monoids}, Theory Appl.~Categ. \textbf{29} (2014), no.~7, 198--214.

\bibitem{MRVdL2}
A.~Montoli, D.~Rodelo, and T.~Van~der Linden, \emph{On the reflectiveness of special homogeneous surjections of
  monoids}, Categorical Methods in Algebra and Topology, Textos de
  Matem{\'a}tica (S{\'e}rie~B), vol.~46, Departamento de Matem{\'a}tica da
  Universidade de Coimbra, 2014, pp.~237--244.

\bibitem{P0}
M.~C. Pedicchio, \emph{A categorical approach to commutator theory}, J.~Algebra
  \textbf{177} (1995), 647--657.

\end{thebibliography}
%\bibliographystyle{amsplain}

%\pagebreak

\providecommand{\noopsort}[1]{}
\providecommand{\bysame}{\leavevmode\hbox to3em{\hrulefill}\thinspace}
\providecommand{\MR}{\relax\ifhmode\unskip\space\fi MR }
% \MRhref is called by the amsart/book/proc definition of \MR.
\providecommand{\MRhref}[2]{%
  \href{http://www.ams.org/mathscinet-getitem?mr=#1}{#2}
}
\providecommand{\href}[2]{#2}

\end{document}